\numberwithin{equation}{section}
\newtheorem{theorem}{Theorem}[section]
\newtheorem{proposition}[theorem]{Proposition}
\newtheorem{lemma}[theorem]{Lemma}
\newtheorem{prop}[theorem]{Proposition}
\theoremstyle{definition}
\newtheorem{remark}[theorem]{Remark}
\newtheorem{Conjecture}[theorem]{Conjecture}
\newcommand\half{\tfrac{1}{2}}
\newcommand\ov{\overline}
\newcommand\be{\beta}
\newcommand\g{\mathfrak g}
\newcommand\h{\mathfrak h}
\newcommand\ha{\widehat{\mathfrak h}}
\newcommand\n{\mathfrak n}
\newcommand\D{\Delta}
\renewcommand\l{\lambda}
\newcommand\Dp{\Delta^+}
\renewcommand\d{\delta}
\renewcommand\a{\alpha}
\newcommand{\Z}{\mathbb Z}
\newcommand\nat{\mathbb N}
\newcommand\s{\sigma}
\newcommand\la{\lambda}
\newcommand\e{\epsilon}
\newcommand\C{\mathbb C}
\newcommand\R{\mathbb R}
\newcommand\si{\sigma}
\renewcommand\ha{\widehat{\mathfrak h}}
\newcommand{\fn}{\mathfrak{n}}
\newcommand{\ZZ}{\mathbb{Z}}
\newcommand{\End}{\mbox{End}}
\newcommand{\vac}{|0\rangle}
\newcommand{\bea}{\begin{eqnarray}}
\newcommand{\eea}{\end{eqnarray}}
\newcommand{\Ws}{W_k^{\min}(\g)}
\newcommand{\Wu}{W^k_{\min}(\g)}
\begin{document}

\title[Spectral flow and application to unitarity]{Spectral flow and application to unitarity of representations of minimal
$W$-algebras}
\author[Victor~G. Kac, Pierluigi M\"oseneder Frajria,  Paolo  Papi]{Victor~G. Kac\\Pierluigi M\"oseneder Frajria\\Paolo  Papi}

\begin{abstract}
Using spectral flow, we provide a proof of  \cite[Theorem 9.17]{KMPR} on unitarity of Ramond twisted non-extremal representations of unitary minimal 
$W$-algebras that does not rely on the still conjectural  exactness of the twisted quantum reduction functor (see Conjecture 9.11 of \cite{KMPR}). When $\g=spo(2|2n)$, $F(4)$, $D(2,1;\frac{m}{n})$, it is also proven that the unitarity of extremal (=massless) representations of the unitary minimal $W$-algebra $\Wu$ in the Ramond sector is equivalent to the unitarity of extremal representations in the Neveu-Schwarz sector.
 \end{abstract}
\maketitle

\section{Introduction}

Unitarity of highest weight representations of infinite-dimensional Lie
algebras and superalgebras has been studied extensively in Mathematics
and Physics literature since 1980's. \par
 In several papers, including \cite{DL}, \cite{KMP}, the problem was put in the framework of vertex operator algebras (VOA) for which the eigenvalues of the zero mode $L_0$ of the conformal vector lie in $\half\Z_{\ge 0}$. In this language, given a conjugate linear involution $\omega$ of a VOA $V$,  a Hermitian form on an irreducible representation $M$ of $V$ is called {\it $\omega$-invariant} if the  adjunction relation 
$$(a^M_n)^*=(-1)^{\D+2\D^2}\omega(a)^M_{-n}$$
holds for any quasi-primary element $a\in V$ of conformal weight $\D(\in \half\Z_{\ge 0})$, and $M$ is called {\it unitary} if the Hermitian form is positive definite.

Most of the (super)algebras for which the problem has been investigated are
either affine Lie  algebras, or arise naturally in the context of a
special class of vertex algebras, called
quantum affine $W$-algebras, obtained by quantum Hamiltonian reduction
from affine vertex algebras. In particular, all superconformal algebras
correspond to the so called minimal $W$-algebras, $\Ws$, where $\g$ is a ``basic'' simple finite-dimensional Lie superalgebra.
\par

In our paper \cite{KMP1} we started a project aiming at the classification of unitary minimal $W$-algebras and their unitary highest weight representations.
The development and the state of art of this project, continued in \cite{KMPR, KMPN4}, is summarized in Section \ref{2} of this paper.
According to the Physicists' terminology, there are two most important classes of representations,  called  the Neveu-Schwarz sector and the Ramond sector, respectively. They correspond to the  untwisted modules and to $\s_R$-twisted modules, where 
$\s_R$ is the automorphism which acts as the identity on  even elements and as  minus the identity on  odd elements.

Regarding the Neveu-Schwarz sector, in \cite{KMP1}, we gave a detailed proof of the classification of  unitary minimal $W$-algebras and their non-extremal (=massive) unitary representations and  found the necessary conditions of unitarity of the extremal (=massless) ones. In \cite{KMPR} and \cite{KMPN4} we  proved necessary conditions for unitarity of highest weight representations  in Ramond sector, but our proof of their unitary  relied on the still conjectural exactness property of the twisted quantum Hamiltonian reduction functor (\cite[Conjecture 9.11]{KMPR}).

In the present paper we provide a proof of unitarity of the massive highest weight representations in Ramond sector  that does not rely on this conjecture (see Theorem \ref{sfnomextremal}). This result  completes the classification of non-extremal unitary representations for all minimal unitary $W$-algebras. 
We also prove that in the cases $\g=psl(2|2)$, $spo(2|2m)$, $D(2,1;a)$, and $F(4)$ unitarity of extremal modules in the Neveu-Schwarz sector is equivalent to that in the Ramond sector (see Theorem \ref{fromNStoR}).

The proofs are based on a functor, described in Section \ref{SF}, between the categories of untwisted modules over a vertex algebra and the Ramond twisted modules. Such functor cannot exist for minimal $W$-algebras when $\g=spo(2|2m+1)$ and $G(3)$ since in these cases the quantum Hamiltonian reduction of a simple module is not simple (\cite[Conjecture 9.11]{KMPR}), while it is simple in the Neveu-Schwarz sector, by Arakawa's theorem. In these cases the proof of unitarity 
of Ramond twisted highest weight massive representations has been already provided by \cite[Corollary 9.5]{KMPR}.

In the paper \cite{Lii2} it is proved that the above  functor is the spectral  flow \cite{Lii}. We thank Haisheng Li for pointing out to us this result.

For the basic notions and facts of the vertex algebra theory we refer to \cite{FHL}, \cite{KB}, and for construction of $W$-algebras to \cite{KW1}.
\section{Unitarity of highest weight representations of minimal $W$-algebras}\label{2}
Let $\g$ be a simple finite-dimensional Lie superalgebra, over $\C$, with a reductive even part $\g_{\bar 0}$ and invariant
non-degenerate bilinear form $(\cdot |\cdot)$, with restriction to $\g_{\bar 0}$ non-degenerate.
 Let $\mathfrak s=Span\{e,x,f\}$, where $[e,f]=x, [x,e]=e, [x,f]=-f$ be an $sl_2$ subalgebra of $\g_{\bar 0}$. To the datum
 $(\g,\mathfrak s, k\in\C)$ one associates the universal quantum affine $W$-algebra $W^k(\g,\mathfrak s)$ of level $k$ by the quantum Hamiltonian reduction
 \cite{KRW}, \cite{KW1}. If $k$ is different from the critical level $k_{crit}$ (which we assume), the vertex algebra $W^k(\g,\mathfrak s)$ has a unique simple  quotient, denoted by $W_k(\g,\mathfrak s)$. These VOA are of strong CFT  type \cite{KMP}, hence are generated by quasi-primary elements.
 
 The {\it minimal} $W$-algebras $\Wu$ correspond to a choice of $\mathfrak s$, called minimal, for which the
 $ad\,x$-eigenspace decomposition is of the form 
 \begin{equation}\label{1.1}
 \g=\g_{-1}\oplus\g_{-1/2}\oplus\g_0\oplus\g_{1/2}\oplus\g_1,\quad \text{ where $\g_{-1}=\C f,\,
 \g_{1}=\C e$}.\end{equation}
 We normalize the bilinear form $(\cdot |\cdot)$ by the condition $(x|x)=\half$. Then $k_{crit}=-h^\vee$, where 
 $h^\vee$ is half of the eigenvalue of the Casimir operator on $\g$. The decomposition \eqref{1.1} and the numbers $h^\vee$ are listed in \cite[Tables 1-3]{KW1}.

It is proved in \cite[Proposition 7.2]{KMP1} that for  a non-collapsing level $k\in\R$, any conjugate linear involution $\phi$ of the vertex algebra $\Wu$ is necessarily induced by a conjugate linear involution $\phi$ of $\g$ fixing $\mathfrak s$. (Recall that $k$ is called a collapsing level if $\Ws$ is isomorphic to its affine part.) Moreover, it is proved in \cite[Proposition 8.9]{KMP1} that the vertex algebra $\Ws$
can be  unitary only if the centralizer $\g^\natural$ of $\mathfrak s$ in $\g$ is a semisimple subalgebra of $\g_{\bar0}$, and the conjugate linear  involution $\phi$ is {\sl almost compact}, i.e. it restricts to a compact  involution of $\g^\natural$, and it leaves $\{e,x,f\}$ fixed. We write $\g^\natural=\oplus_i\g^\natural_i$, where $\g^\natural_i$ are simple components of $\g^\natural$.

We prove in  \cite{KMP1} that an almost compact conjugate linear involution of $\g$ exists if and only if $\g$ is from the following lists:
\begin{equation}\label{isunitary}
psl(2|2),\ spo(2|m) \text{ for }m\ge0, D(2,1;a)\text{ for }a\in \R,\ F(4),\ G(3);
\end{equation}
\begin{equation}\label{nonunitary}
sl(2|m)\text{ for }m\ge3,\ osp(4|m)\text{ for }m>2\text{ even},
\end{equation}
and it is essentially unique. 
Recall that, for $k\ne k_{crit}$,  the vertex algebra $\Wu$ is of strong CFT type with Virasoro field $L=\sum_{n\in\Z}L_nz^{-n-2}$, and it is strongly and freely generated by the operators $L_n$, $n\in\Z$, and the Fourier coefficients of the primary fields $J^{\{a\}}(z)=\sum_{n\in\Z} J^{\{a\}}_{n}z^{-n-1}$, $a\in\g^\natural$, of conformal weight 1, and $G^{\{u\}}(z)=\sum_{n\in\half+\Z}G^{\{u\}}_nz^{-n-\tfrac{3}{2}}$, $u\in\g_{-1/2}$, of conformal weight $\tfrac{3}{2}$ \cite[Theorems 4.1 and 5.1]{KW1}. The $\lambda$-brackets among these generators are displayed in \cite{KRW, KW1, AKMPP}.\par
When $\g=spo(2|m)$, $m=0,1$, and $2$, the $W$-algebra $\Wu$ is the universal Virasoro, Neveu-Schwarz, and $N=2$ vertex algebra, respectively, for which unitarity  is well understood,  see e.g. \cite{AKMPP, ET1, Gunaydin,  M} for references.
For the remaining cases, we proved in \cite[Proposition 8.19]{KMP1} that, for $k\ne k_{crit}$, the minimal $W$-algebra $\Ws$ is not unitary for $\g$ from the list \eqref{nonunitary}, except when $\g=sl(2|m)$, $m\ge3$, and the level is the collapsing level $k=-1$. Furthermore, we proved in \cite[Corollary 11.2]{KMP1} that, for $\g$ from the list \eqref{isunitary}, the vertex algebra $\Ws$ is non-trivial   unitary for $k\ne k_{crit}$ if and only if $k$ lies in the {\sl unitary range}, given in the  following Table \ref{tabel0}, where we also display the critical values of $k$ and the collapsing values   $k_0$ for which $\dim W_{k_0}^{\min}(\g)=1$:
\renewcommand{\arraystretch}{1.4}
\begin{center}
\begin{tabular}{c | c| c |c  | c}
$\g$&
unitary range&
$k_{crit}$& $k_0$ & $\chi_i$ \\
\hline
$psl(2|2)$&$-(\nat+1)$&$0$&$-1$ &$-1$\\
\hline
$spo(2|3)$&$-\tfrac{1}{4}(\nat+2)$&$-\half$&$-\half$&$-2$\\
\hline
$spo(2|m),\,m\ge4$&$-\tfrac{1}{2}(\nat+1)$&$\tfrac{m}{2}-2$&$-\half$&$-1$\\
\hline
$D(2,1;\tfrac{m}{n})$&$-\tfrac{mn}{m+n}\nat,\ m,n\in\nat\text{ coprime},\, (m,n)\ne(1,1)$&$0$& \text{none}&$-1,-1$\\
\hline
$F(4)$&$-\tfrac{2}{3}(\nat+1)$&$2$&$-\tfrac{2}{3}$&$-1$\\
\hline
$G(3)$&$-\tfrac{3}{4}(\nat+1)$&$\tfrac{3}{2}$&$-\tfrac{3}{4}$&$-1$
\end{tabular}
 \captionof{table}{\label{tabel0}}
\end{center}
 In our paper \cite{KMP1}, we also studied unitarity of irreducible highest weight $\Wu$-modules $L^W(\nu,\ell)$ (to be described in detail in Section \ref{5}), where $\g$ is one of the Lie superalgebras from Table \ref{tabel0} (with the exception of $spo(2|m)$, $m\le 2$, for which the answer is well-known), and $k$ lies in the unitary range. These modules are parametrized by pairs $\nu\in (\h_\R^\natural)^*$ and $\ell\in\R$. The following are necessary conditions for  unitarity of $L^W(\nu,\ell)$: 
 \begin{enumerate}
 \item[(NS1)] the affine levels $M_i(k)$  for $\g^\natural_i$,  explicitly displayed in \cite[Table 2]{KMP1}, are non-negative integers;
 \item[(NS2)] $\nu\in P^+_k=\{\text{dominant integral weights for }\g^\natural\text{ such that }\nu(\theta_i^\vee)\le M_i(k)\}$ where $\theta_i$ are highest roots of $\g_i^\natural$.
 \item[(NS3)]$\ell\ge A(k,\nu)$, where $A(k,\nu)$ is defined in \cite[formula (8.11)]{KMP1}, and $\ell=A(k,\nu)$ if $\nu$ is an extremal weight (i.e. $\nu(\theta_i^\vee)>M_i(k)+\chi_i$ for some $i$, $\chi_i$ being displayed in Table \ref{tabel0}. \end{enumerate}
 \begin{theorem}\cite{KMP1} The highest weight modules
\begin{equation}\label{hwu}
\left\{L^W(\nu,\ell)\mid \nu\in P^+_k,\nu\text{\rm\ non-extremal, }\ell\ge A(k,\nu)\right\}
\end{equation}
are unitary.
\end{theorem}
\begin{Conjecture}\label{NS}
The set of unitary highest weight modules  over $\Wu$ is the union of \eqref{hwu} and 
\begin{equation*}
\left\{L^W(\nu,A(k,\nu))\mid \nu\in P^+_k,\nu\text{ extremal}\right\}.
\end{equation*}
\end{Conjecture}
 
The Ramond case was dealt with in the paper \cite{KMPR}. We had to introduce a suitable notion of extremality, called {Ramond extremality} (cf. \cite[(9.3)]{KMPR}) and a constant $A_R(k,\nu)$ (cf. \cite[(6.31)]{KMPR}  in place of $A(k,\nu)$. 
There are two types  of these modules satisfying the necessary conditions of unitarity:
\begin{enumerate}
\item[(R1)] the modules $L_R^W(\nu,\ell)$ with $\nu\in P^+_k$ not Ramond extremal and $\ell\ge  A_R(k,\nu)$;
\item[(R2)] the modules $L_R^W(\nu,\ell)$ with the weight $\nu\in P^+_k$  Ramond extremal,
in which case $
\ell=A_R(k,\nu)$.
\end{enumerate}
In the next sections  we shall build the spectral flow functor between Neveu Schwarz and Ramond sectors for minimal $W$-algebras. Using this functor, we shall prove that the modules 
listed in (R1) are indeed unitary. In \cite{KMPR} we proved the result  assuming true the exactness of the twisted quantum Hamiltonian reduction functor which, in the untwisted case,  holds by the work of  Arakawa. In case (R2) we don't know how to prove unitarity. Summing up, we have the following theorem and conjecture.
\begin{theorem} \cite{KMPR} The highest weight Ramond twisted modules
\begin{equation}\label{hwur}
\left\{L^W_R(\nu,\ell)\mid  \nu \text{ \rm is not  Ramond extremal}, \ell\ge A_R(k,\nu)\right\}
\end{equation}
are unitary.
\end{theorem}
\begin{Conjecture}\label{R}
The set of unitary highest weight modules over  $\Wu$ is the union of \eqref{hwur} and
\begin{equation*}
\left\{L^W_R(\nu,A(k,\nu))\mid\nu \text{ is   Ramond extremal}\right\}.
\end{equation*}
\end{Conjecture}

\section{A functor between categories of positive energy twisted modules}\label{SF}

In Section \ref{4} we will build up a functor between positive energy $\Wu$-modules mapping  untwisted highest weight modules to Ramond twisted highest weight modules.
This functor is obtained by applying twice   the  construction  of the contragredient module. It is shown in \cite[Section 2]{Lii2} that this functor  coincides with the spectral flow defined in \cite{Lii}.
To keep the paper as self-contained as possible and to take care of the modifications needed to work with  conjugate dual modules rather than  contragredient ones, we provide in this section some details of this construction.

Let $V$ be a vertex  algebra that
admits a conformal (=Virasoro) vector $L$ with $L_0$ acting diagonally with real eigenvalues (=conformal weights).  Let 
$$
V=\oplus_{\D\in \R}V_\D
$$
be the corresponding eigenspace decomposition.
We make the further assumption that  $V_0=\C\vac$.
This implies that $V_1$ is a Lie superalgebra under the bracket
$$
[a,b]=a_{(0)}b,
$$
 that admits an invariant bilinear form $\be$ given by 
$$
a_{(1)}b=\be(a,b)\vac.
$$

Let $f$ be a parity preserving diagonalizable automorphism of $V$ with modulus one eigenvalues such that $f(L)=L$. If $\gamma\in\R$ set
$V^{[\gamma]}$ to be the $e^{2\pi\sqrt{-1}\gamma}$-eigenspace of $f$ so that
$$
V=\oplus_{[\gamma]\in\R/\ZZ}V^{[\gamma]}
$$
is a vertex algebra grading and $L_0(V^{[\gamma]})\subseteq V^{[\gamma]}$.

Recall that 
an $f$--\emph{twisted module} for $V$ is a vector superspace $M$ and a parity preserving
linear map from $V$ to the space of $\End M$--valued 
$f$--\emph{twisted quantum fields} 
$ V^{[\eta_a]} \ni a\mapsto Y_M (a,z) = \sum_{m \in [\gamma_a]} a^M_{(m)}
z^{-m-1}$ (i.e. $a^M_{(m)} \in \End M$ and $a^M_{(m)} v=0$ for
each $v \in M$ and $m \gg 0$), such that (2.12) and (2.13) of \cite{KMP} hold.

Since $V^{[\gamma]}$ is $L_0$--invariant, we have its eigenspace decomposition $V^{[\gamma]}=\oplus_\D V_\D^{[\gamma]}$, and we will write for $a\in V_{\Delta_a}^{[\gamma]}$,
$$
Y_M(a,z)=\sum_{n\in [\gamma-\D_a]}a^M_nz^{-n-\D_a}.
$$

An $f$--twisted $V$--module $M$ is called an \emph{$L$-positive
  energy  $V$--module} if $M$ has an
$\R$--grading $M =\oplus_{{j \geq 0}} M_j$ such that
\begin{equation}
  \label{eq:2.39}
  a^M_n M_j \subseteq M_{j - n}, \ a\in V^{[\gamma]}_{\D_a} .
\end{equation}
The subspace ~$M_0$ is called the
\emph{minimal energy subspace}.  Then,
\begin{equation}
  \label{eq:2.40}
  a^M_n M_0 = 0 \hbox{  for  } n>0 \hbox{  and  }a^M_0 M_0
  \subseteq M_0 \, .
\end{equation}

Set
$$p(a)=\begin{cases}0&\text{ if $a\in V_{\bar{0}}$},\\ 1 &\text{ if $a\in V_{\bar{1}}$, }
\end{cases}
$$
Note that we will regard  $p(a)$ as an integer, not as a residue class.
 If $a\in V_{\D_a}$, set for $t\in\R$,
\begin{align}\label{(-1)L0}
(-1)^{tL_0}a&=e^{\pi\sqrt{-}1t\D_a}a,\quad\s^{t}(a)=e^{\pi\sqrt{-}1tp(a)}a.
\end{align}
%
\begin{lemma}\label{twistAz}Let $g$ be a diagonalizable parity preserving conjugate  linear  operator on $V$ with modulus $1$ eigenvalues.
Then  
\begin{equation}\label{congsign}
g Y(a,z)g^{-1} b=p(a,b)Y(g (a),-z)b
\end{equation}
if and only if (with the notation established in \eqref{(-1)L0})
 \begin{equation}\label{gamma}\phi=g (-1)^{-L_0} \s^{-1/2} \end{equation} is  a conjugate linear automorphism of $V$.
\end{lemma}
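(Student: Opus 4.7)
The plan is to verify the equivalence by a direct, reversible computation on $Y(a,z)$, using the factorization $g = \phi\,\sigma^{1/2}(-1)^{L_0}$ obtained from \eqref{gamma}.

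First I would record how each of the two even, complex-linear diagonal factors $(-1)^{L_0}$ and $\sigma^{1/2}$ conjugates a vertex operator. The relation $[L_0, a_{(n)}] = (\Delta_a - n - 1)\,a_{(n)}$ together with $e^{-\pi\sqrt{-1}(n+1)}z^{-n-1} = (-z)^{-n-1}$ yields
\[
(-1)^{L_0} Y(a,z) (-1)^{-L_0} = Y\bigl((-1)^{L_0}a,\,-z\bigr),
\]
while the fact that $\sigma^{1/2}$ is an even diagonal operator scaling $a_{(n)}$ by $e^{\pi\sqrt{-1}\,p(a)/2}$ under conjugation gives
\[
\sigma^{1/2} Y(a,z) \sigma^{-1/2} = Y(\sigma^{1/2}a,\,z).
\]
Thus the entire ``$-z$'' in the conclusion arises from the $(-1)^{L_0}$ factor alone, and neither of these two even operators contributes a Koszul parity sign.

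Next I would substitute the factorization into $gY(a,z)g^{-1}b$ and push the two even factors through $Y(a,z)$, reducing the left hand side of \eqref{congsign} to $\phi\,Y\bigl((-1)^{L_0}\sigma^{1/2}a,\,-z\bigr)\phi^{-1}b$. Conjugate linearity of $\phi$ sends the scalar $e^{\pi\sqrt{-1}(\Delta_a+p(a)/2)}$ hidden inside the argument to its complex conjugate, so the first slot of $Y$ collapses to $(-1)^{-L_0}\sigma^{-1/2}\phi(a) = g(a)$. The defining property of a conjugate linear vertex algebra automorphism, $\phi Y(c,w)\phi^{-1}d = p(c,d)\,Y(\phi c,w)\,d$, applied with $c = (-1)^{L_0}\sigma^{1/2}a$ and $d = b$, then produces exactly the asserted Koszul sign $p(a,b)$ on the right. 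Running the same chain of equalities in reverse yields the converse implication.

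The delicate point will be keeping the sign $p(a,b)$ honest: it must come exclusively from the conjugate linearity axiom for $\phi$ and not spuriously from $(-1)^{L_0}$ or $\sigma^{1/2}$, both of which are parity-preserving and scalar on each $L_0$-eigenspace. This amounts to fixing the super-algebra convention for conjugate linear automorphisms from \cite{KMP} at the outset, after which all the intermediate bookkeeping is routine scalar arithmetic with $e^{\pm\pi\sqrt{-1}}$-factors.
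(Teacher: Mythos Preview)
Your overall strategy---factor $g=\phi\,\sigma^{1/2}(-1)^{L_0}$ and push the two diagonal factors through $Y(a,z)$---is exactly the computation the paper carries out mode by mode, so the approaches coincide. However, you have misattributed the source of the Koszul sign, and in doing so made two compensating errors.

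The claim that $\sigma^{1/2}Y(a,z)\sigma^{-1/2}=Y(\sigma^{1/2}a,z)$ is false: $\sigma^{1/2}$ is \emph{not} a vertex algebra automorphism. Since $p(a)$ is treated as an integer in $\{0,1\}$, one has $p(a)+p(b)+2p(a)p(b)\equiv p(a_{(n)}b)\pmod{4}$, so $e^{\pi\sqrt{-1}(p(a)+p(b))/2}$ and $e^{\pi\sqrt{-1}\,p(a_{(n)}b)/2}$ differ by $(-1)^{p(a)p(b)}=p(a,b)$ when $a,b$ are both odd. Hence
\[
\sigma^{1/2}Y(a,z)\sigma^{-1/2}b \;=\; p(a,b)\,Y(\sigma^{1/2}a,z)\,b,
\]
and the entire Koszul sign in \eqref{congsign} is produced by the $\sigma^{1/2}$ step. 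Correspondingly, your ``defining property'' $\phi Y(c,w)\phi^{-1}d=p(c,d)Y(\phi c,w)d$ is wrong: a conjugate linear automorphism satisfies $\phi(a_{(n)}b)=\phi(a)_{(n)}\phi(b)$ with \emph{no} sign (this is precisely what the paper verifies). Your two errors cancel, so the conclusion survives, but your final paragraph has the mechanism exactly backwards: the delicate point is that $p(a,b)$ comes \emph{exclusively from $\sigma^{1/2}$}, while $\phi$ and $(-1)^{L_0}$ contribute none.
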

\begin{proof}
 Assume that $g$ satisfies \eqref{congsign}. This means that
\begin{equation}\label{ggggg}
 g(a_{(n)}b)=(-1)^{n+1} p(a,b)g(a)_{(n)}g(b).
\end{equation}

 Then
  \begin{align}\label{23}\phi(a)_{(n)}\phi(b)&=g((-1)^{-L_0} \s^{-1/2}(a)_{(n)}g((-1)^{-L_0} \s^{-1/2})(b)\\
&=e^{\pi\sqrt{-}1(\D_a+\D_b)}e^{\pi/2\sqrt{-}1(p(a)+p(b))}g(a)_{(n)}g(b).\notag
\end{align}
By \eqref{ggggg}, substituting in \eqref{23}, and noting 
that $p(a)+p(b)+2p(a)p(b)=p(a_{(n)}b)\mod 4\Z$, we obtain
\begin{align*}\phi(a)_{(n)}\phi(b)&=e^{\pi\sqrt{-}1(\D_a+\D_b)}e^{\pi/2\sqrt{-}1(p(a)+p(b))}(-1)^{n+1} p(a,b)g(a_{(n)}b)\\
&=e^{\pi\sqrt{-}1\D_{a_{(n)}b}}e^{\pi/2\sqrt{-}1(p(a)+p(b)+2p(a)p(b))}g(a_{(n)}b)\\
&=ge^{-\pi\sqrt{-}1\D_{a_{(n)}b}}e^{-\pi/2\sqrt{-}1p(a_{(n)}b)}(a_{(n)}b)=\phi(a_{(n)}b).
\end{align*}
Reversing the argument we obtain the converse statement. 
\end{proof}

Note that Lemma \ref{congsign} implies that $g(\vac)=\vac$,  $g(\partial a)=-\partial g(a)$ and that $L':=g(L)$ is a conformal  vector for $V$. Indeed, $g(\vac)=\phi(\vac)=\vac$;  if $a\in V_\D$, then
$$
\partial g(a)=\partial \phi \s^{1/2} (-1)^{L_0} a=\phi(e^{\pi\sqrt{-}1(\D_a+\frac{1}{2}p(a))}\partial a=-\phi(e^{\pi\sqrt{-}1(\D_{\partial a}+\frac{1}{2}p(\partial a))}\partial a=-g(\partial a);
$$
and, since $g(L)=\phi(L)$, we have
$$
[g(L)_\la g(L)]=\phi([L_\la L])=\partial \phi( L)+2\phi(L)+\frac{ \la^3 }{12}\ov c\vac=\partial g( L)+2g(L)+\frac{ \la^3 }{12}\ov c\vac.
$$

 Let $g$ be a diagonalizable parity preserving conjugate linear  operator on $V$  satisfying \eqref{congsign}.
Define $A(z):V_\D\to z^{-2\D} V[[z]]$ by 
\begin{equation}\label{AZ}
A(z)v=ge^{-zL_1}  z^{-2L_0}v.
\end{equation}

Using  the results of \cite[\S\ 4.9]{KB}, one deduces from Lemma \ref{twistAz}, as in Lemma 3.3 of \cite{KMP}, that
 \begin{equation}\label{conjAzzt}
p(a,b)A(w)Y(a,z)A(w)^{-1}b=i_{w,z}Y\left( A(z+w)a,\frac{-z}{(z+w)w}\right)b.
\end{equation}

If $M$ is a $L$-positive energy $f$-twisted $V$-module, we set 
$$
M^\dagger=\oplus_{n}M^\dagger_n,
$$
where $M^\dagger_n$ is the  conjugate linear dual of $M_n$. Recall that $M^\dagger_n$ is the set of additive maps $f:M_n\to \C$ such that $f(\alpha m)=\ov{\alpha} f(m)$ for $m\in M_n$ and $\alpha\in\C$.

{\sl Assume now that  the conformal weights for $L$ are in $\half \ZZ_+$ and that $\phi(L)=L$.} Choose an even element $h\in V_1$ such that $\phi(h)=-h$ and $f(h)=h$. Assume that $h_0$ acts semisimply on $V$ with real eigenvalues.
For $t\in\R$, set
$$L\langle t h\rangle=L+t\partial h.$$ Since $V_\D=\{0\}$ for $\D<0$, we have
$
[h_\l h]=\l\be(h,h)\vac.
$
This implies that, for $t\in\R$
$$
[L\langle t h\rangle_\l L\langle t h\rangle]=\partial L\langle t h\rangle+2L\langle t h\rangle+\la^3\frac{c-12t^2\be(h,h)}{12},
$$
so $L\langle t h\rangle$ is a Virasoro vector for $V$. 
To clarify notation, we set  $V_{\D(t)}$ to be the eigenspace for $L\langle t h\rangle$ corresponding to the conformal weight $\D(t)$. If $M$ is a $f$-twisted module and $a\in V^{[\gamma]}_{\D_a}$, we will write
$$
Y_M(a,z)=\sum_{n\in[\gamma-\D_a]}a^M_nz^{-n-\D_a},
$$
while, if $a\in V^{[\gamma]}_{\D_a(t)}$, we write the mode expansion with respect to  $L\langle t h\rangle$ as
$$
Y_M(a,z)=\sum_{n\in[\gamma-\D_a(t)]}a^M(n,t)z^{-n-\D_a(t)}.
$$
Note that, since $L\langle t h\rangle_{(1)}L\langle t h\rangle=2L\langle t h\rangle=L_{(1)}L\langle t h\rangle$, we have
$$
L\langle t h\rangle(n,t)=L\langle t h\rangle_n=L_n-t(n+1)h_n.
$$
In particular 
\begin{equation}\label{Lth0}
L\langle t h\rangle(0,t)=L\langle t h\rangle_0=L_0-th_0.
\end{equation}
Since $[f,h_0]=0$, we can write 
\begin{equation}\label{fffff}V^{[\gamma]}=\bigoplus_{\gamma'\in\R} V^{[\gamma],\gamma'},\end{equation}
where $V^{[\gamma],\gamma'}$ is the $\gamma'$-eigenspace for $h_0$. Since $f(L)=L$, we have that 
$$V^{[\gamma],\gamma'}=\bigoplus_\D V^{[\gamma],\gamma'}_\D.$$
By \eqref{Lth0}, if $a\in V^{[\gamma_a],\gamma'_a}_{\D_a}$,  then $\D_a(t)=\D_a-t\gamma'_a$ and
$$
a^M(n,t)=a^M_{n-t\gamma'_a}.
$$

Also, by \eqref{Lth0}, $L\langle t h\rangle(0,t)=L\langle t h\rangle_0$ acts semisimply on $V$ with real eigenvalues, so that we can define
 $$A(z,th)=ge^{-zL\langle t h\rangle_1}  z^{-2L\langle t h\rangle_0},
 $$
 as in \eqref{AZ}, with $L=L\langle t h\rangle$ and $g=\phi\s^{1/2}(-1)^{L\langle t h\rangle_0}$.

\begin{prop}\label{functor}
Let $M$ be an $f$-twisted $L\langle t h\rangle$-positive energy $V$-module. Set, for $m^\dagger\in M^\dagger, m\in M$ and $a\in V$, 
\begin{equation}\label{mmmmm}
(Y_{M^\dagger} (a,z)m^\dagger)(m)=m^\dagger(Y_M(A(z,th)a,z^{-1})m).
\end{equation}
Define
$$f\langle t\rangle=\phi^{-1}e^{-4\pi \sqrt{-}1th_0}f\phi.$$
 Then
$Y_{M^\dagger}  (a,z)$ is a $f\langle t\rangle$-twisted quantum field and the map $a\mapsto Y_{M^\dagger}  (a,z)$ gives  $M^\dagger$ the structure of an 
$f\langle t\rangle$-twisted $L\langle 3t h\rangle$-positive energy $V$-module.
\end{prop}
\begin{proof}
Recall that  $V=\bigoplus\limits_{\gamma}V^{[\gamma]}$ is the grading induced by $f$. Write $V=\bigoplus\limits_{\gamma^{(t)}}V^{[\gamma^{(t)}]}$ for the eigenspace decomposition with respect to $f\langle t\rangle$. 
Clearly, with notation as in \eqref{fffff}, we have 
\begin{equation}\label{df}V^{[\gamma^{(t)}]}=\bigoplus _{\gamma^{(t)}=-\gamma_1+2t\gamma'}\phi^{-1}(V^{[\gamma_1],\gamma'}).
\end{equation}
Note also that, since $\phi(L)=L$, we have   $L\langle s\rangle_0=\phi^{-1}\circ L\langle -s\rangle_0$, so $V^{[\gamma^{(t)}]}$ is $L\langle s\rangle_0$-stable and 
$$V^{[\gamma^{(t)}]}_{\D(s)}=\bigoplus _{\gamma^{(t)}=-\gamma_1+2t\gamma'}\phi^{-1}(V^{[\gamma_1],\gamma'}_{\D(-s)}).
$$ 
In particular, if    $a\in \phi^{-1}(V^{[\gamma_1],\gamma'})_{\D_a}= \phi^{-1}(V^{[\gamma_1],\gamma'}_{\D_a})$ then 
\begin{equation}\label{das}\D_a(s)=\D_a+s\gamma'.\end{equation}
By \eqref{mmmmm},
\begin{equation}\label{s1}(Y_{M^\dagger}(a,z)m^\dagger)(m)=
\sum_{r\in\Z_+}(\tfrac{(-1)^r}{r!}m^\dagger ( Y_M(g({L\langle t h\rangle_1^r}a), z^{-1})m)z^{-2\D_a-2t\gamma'+r}.
\end{equation}
We have $a=\phi^{-1}(b)$ with $b\in V^{[\gamma_1],\gamma'}_{\D_a}$ so
$$
\begin{aligned}
&g(L\langle t h\rangle_1^ra)=\phi\s^{1/2}(-1)^{L\langle t h\rangle_0}L\langle t h\rangle_1^r\phi^{-1}(b)=\s^{-1/2}(-1)^{-L\langle t h\rangle_0}\phi\phi^{-1}(L\langle-t\rangle^r_1b)\\
&=\s^{-1/2}(-1)^{-L\langle t h\rangle_0}(L\langle-t\rangle^r_1b)\in V^{[\gamma_1],\gamma'}_{\D_a-r},
\end{aligned}
$$
hence we can write
\begin{equation}\label{s2}
 Y_M(g(L\langle t h\rangle_1^ra), z)=\sum_{n\in[\gamma_1-\D_a+t\gamma']}g(L(h)_1^ra)^M(n,t)z^{-n-\D_a+t\gamma'+r},
\end{equation}
so, substituting \eqref{s2} into \eqref{s1}, we get
$$
\begin{aligned}
&(Y_{M^\dagger}(a,z)m^\dagger)(m)\\&=
\sum_{r,n\in[\gamma_1-\D_a+t\gamma']}(\tfrac{(-1)^r}{r!}m^\dagger (g(L\langle t h\rangle_1^ra)^M(n,t)m)z^{n-\D_a -3t\gamma'}\\
&=\sum_{r,n\in[-\gamma_1-t\gamma'+\D_a]}(\tfrac{(-1)^r}{r!}m^\dagger(g(L\langle t h\rangle_1^ra)^M(-n,t)m)z^{-n-\D_a(3t)}\\
&=\sum_{r,n\in[-\gamma_1+2t\gamma'-\D_a(3t)]}(\tfrac{(-1)^r}{r!}m^\dagger (g(L\langle t h\rangle_1^ra)^M(-n,t)m)z^{-n-\D_a(3t)}\\
&=\sum_{r,n\in[-\gamma_1+2t\gamma'-\D_a(3t)]}(\tfrac{1}{r!}m^\dagger ((g(L\langle t h\rangle)_1^rg(a))^M(-n,t)m)z^{-n-\D_a(3t)},
\end{aligned}
$$
where, in the last equalities, we used the assumption that $\D_a\in\half\ZZ$.
Writing  explicitly 
$$
Y_{M^\dagger}(a,z)=\sum_{n\in [\gamma-\D_a(3t)]}a^{M^\dagger}(n,3t)z^{-n-\D_a(3t)},
$$
we find that \begin{equation}\label{undagger}
(a^{M^\dagger}(n,3t)m^\dagger)(m) =m^\dagger((e^{g(L\langle t h\rangle)_1}g(a))^M(-n,t)m).
\end{equation}
In particular
\begin{equation}\label{ispe}
a^{M^\dagger}(n,3t)M^\dagger_j\subseteq M^\dagger_{j-n}.
\end{equation}
This proves that  $Y^{M^\dagger}$ is indeed an  $f\langle t\rangle$-twisted quantum field.

Next observe that
\begin{equation}\label{vacdagger}
( \vac^{M^\dagger}_{(n)}m^\dagger)(m)=(\vac^{M^\dagger}(n+1,3t)m^\dagger)(m)=m^\dagger(\vac^M_{-n-1}m)=\d_{-n-1,0}m^\dagger(m),
\end{equation}
hence (2.12) of \cite{KMP} holds for $M^\dagger$.
The fact that $M^\dagger$ is $L\langle3t\rangle$-positive energy follows readily from \eqref{ispe}. \par
We now prove the Borcherds identity for $M^\dagger$.  We confine ourselves to point out  the  modifications to  the proof of Proposition 3.6 of \cite{KMP} (which is inspired by   Section 5 of \cite{FHL}) needed to accommodate our setting. 

Let $a\in V^{[\gamma_a^{(t)}]}, b\in V^{[\gamma_b^{(t)}]}$, $m\in M$, $m^\dagger \in M^\dagger$. We have to prove that 
\begin{align}
&Res_{u}( Y_{M^\dagger}(Y(a,u)b,w)i_{w,u}(w+u)^ku^nw^q{m^\dagger})(m)\notag\\
&=Res_z( Y_{M^\dagger}(a,z)Y_{M^\dagger}(b,w)i_{z,w}z^k(z-w)^nw^q{m^\dagger})(m)\\
&-p(a,b)Res_z( Y_{M^\dagger}(b,w)Y_{M^\dagger}(a,z)i_{w,z}z^k(z-w)^nw^q{m^\dagger})(m)\notag
\end{align}
for all $n\in\Z$, $k\in[\gamma_a^{(t)}]$, $q\in[\gamma_b^{(t)}]$. Here, as usual, $i_{w,u}$ means expanding in the domain $|w|>|u|$.

 By \eqref{df}, we can assume that $\phi(a)\in V^{[\gamma_1],\gamma'},  \phi(b)\in V^{[\eta_1],\eta'}$, where $\gamma_a^{(t)}=-\gamma_1+2t\gamma',$ $ 
\gamma_b^{(t)}=-\eta_1+2t\eta'$.
Since
 \begin{align*}
( Y_{M^\dagger}(a,z)Y_{M^\dagger}(b,w){m^\dagger})(m)&=m^\dagger(Y_M(A(w,th)b,w^{-1})Y_M(A(z,th)a,z^{-1})(m),\\
( Y_{M^\dagger}(b,w)Y_{M^\dagger}(a,z){m^\dagger})(m)&=m^\dagger(Y_M(A(z,th)a,z^{-1})Y_M(A(w,th)b,w^{-1})(m),\\
( Y_{M^\dagger}(Y(a,u)b,w){m^\dagger})(m)&=m^\dagger(Y_M(A(w,th)Y(a,u)b,w^{-1})(m),
\end{align*}
we have to prove that
\begin{align*}
&Res_{u}m^\dagger(Y_M(A(w,th)Y(a,u)b,w^{-1})(m) i_{w,u}(w+u)^ku^nw^q)\\
&=Res_z{m^\dagger}(Y_M(A(w,th)b,w^{-1})Y_M(A(z,th)a,z^{-1})(m) i_{z,w}z^k(z-w)^nw^q)\\
&-p(a,b)Res_z( {m^\dagger}(Y_M(A(z,th)a,z^{-1})Y_M(A(w,th)b,w^{-1})(m)i_{w,z}z^k(z-w)^nw^q).
\end{align*}
Hence we need to check that
\begin{align}
&Res_{u}(Y_M(A(w,th)Y(a,u)b,w^{-1}) i_{w,u}(w+u)^ku^nw^q)\notag\\
&=Res_z(Y_M(A(w,th)b,w^{-1})Y_M(A(z,th)a,z^{-1})i_{z,w}z^k(z-w)^nw^q)\label{needtocheck}\\
&-p(a,b)Res_z(Y_M(A(z,th)a,z^{-1})Y_M(A(w,th)b,w^{-1}) i_{w,z}z^k(z-w)^nw^q).\notag
\end{align}

Changing variables in the Borcherds identity   for $Y_M$ we obtain, for all $n\in\Z$, $p\in[\gamma_a]$, $d\in[\gamma_b]$,
\begin{align*}
&Res_{u}Y_M(Y(a,u^{-1})b,w^{-1})i_{w^{-1},u^{-1}}(w^{-1}+u^{-1})^pu^{-n-2}w^{-d}\\
&=Res_u(Y_M(a,u^{-1})Y_M(b,w^{-1})i_{u^{-1},w^{-1}}u^{-p-2}(u^{-1}-w^{-1})^nw^{-d})\\
&-p(a,b)Res_u(Y_M(b,w^{-1})Y_M(a,u^{-1})i_{w^{-1},u^{-1}}u^{-p-2}(u^{-1}-w^{-1})^nw^{-d}),\notag
\end{align*}
which is equivalent to
\begin{align}
&Res_{u}Y_M(Y(a,u^{-1})b,w^{-1})i_{u,w}(w+u)^{p}u^{-n-2-p}w^{-d-p}\notag\\
&=Res_{u}(Y_M(a,u^{-1})Y_M(b,w^{-1})i_{w,u}u^{-p-n-2}(w-u)^nw^{-d-n})\label{Jacres}\\
&-p(a,b)Res_{u}(Y_M(b,w^{-1})Y_M(a,u^{-1})i_{u,w}u^{-p-2-n}(w-u)^nw^{-d-n})\notag.
\end{align}
Now remark  that 
\begin{align*}
A(z,t h)a&=\phi\s^{1/2}(-1)^{L\langle t\rangle_0}e^{-zL\langle t\rangle_1}  z^{-2L\langle t\rangle_0}a\\
&=\sum_{r\in\Z_+}\frac{(-1)^r}{r!}e^{-\pi\sqrt{-1}(\D_a(t)+\frac{1}{2}p(a))}(L\langle -t\rangle)^r_1  \phi(a)z^{r-2\D_a(t)}
\end{align*}
hence, setting $C_r(a)=\frac{(-1)^r}{r!}e^{-\pi\sqrt{-1}(\D_a(t)+\frac{1}{2}p(a))}(L\langle -t\rangle)^r_1  \phi(a)$, we have  
\begin{equation}\label{expAz}Y_M(A(z,t h)a, z^{-1})=\sum_{r\in\Z_+} Y_M(C_r(a),z^{-1})z^{r-2\D_a(t)}.\end{equation}

Consider now the following expression, where  $m\in [\gamma_1]$ and $v\in[\eta_1]$
\begin{equation}\label{f1}
\begin{aligned} 
&Res_{z}(Y_M(A(z,t h)a,z^{-1})Y_M(A(w,t h)b,w^{-1})\times\\
&\quad\quad i_{w,z}z^{-m-n-2+2\D_a(t)}(w-z)^nw^{-v-n+2\D_b(t)})\\
&-p(a,b)Res_{z}(Y_M(A(w,t h)b,w^{-1})Y_M(A(z,t h)a,z^{-1})\times\\
&\quad\quad i_{z,w}z^{-m-2-n+2\D_a(t)}(w-z)^nw^{-v-n+2\D_b(t)}).
\end{aligned}
\end{equation}
By \eqref{Jacres} and \eqref{expAz}, we can rewrite \eqref{f1} as 
\begin{align*}
&\sum_{r,s}Res_{z}(Y_M(C_r(a),z^{-1})Y_M(C_s(b),w^{-1})i_{w,z}z^{-m-n-2+r}(w-z)^nw^{-v-n+s})\\
&-p(a,b)\sum_{r,s}Res_{z}(Y_M(C(b),w^{-1})Y_M(C_r(a),z^{-1})i_{z,w}z^{-m-2-n+r}(w-z)^nw^{-v-n+s})\\
&=\!\!\sum_{r,s}\!Res_{z}(Y_M(Y(C_r(a),z^{-1})C(b),w^{-1})i_{z,w}(w+z)^{m-r}z^{-n-2-m+r}w^{-v+s-m+r}=\\
&\!\!\sum_{r}\!Res_{z}(Y_M(Y(C_r(a),z^{-1})A(w,t h)b,w^{-1})\times\\
&i_{z,w}(w+z)^{m-r-2\D_a(t)+2\D_a(t)}z^{-n-2-m+r-2\D_a(t)+2\D_a(t)}w^{-v-m+r-2\D_a(t)+2\D_a(t)+2\D_b(t)}=\\
&Res_{z}(Y_M(i_{z,w}Y(A(\frac{wz}{w+z},t h)a,z^{-1})A(w,t h)b,w^{-1})\times\\
&(w+z)^{m-2\D_a(t)}z^{-n-2-m+2\D_a(t)}w^{-v-m+2\D_a(t)+2\D_b(t)}.
\end{align*}
By a change of variables, \eqref{f1} turns  into  the right hand side of \eqref{needtocheck}. Indeed, 
setting  $k=-n-2-m+2\D_a(t), q=-v-n+2\D_b(t)$, we have
\begin{align*}
&Res_{z}(Y_M(A(z, t h)a,z^{-1})Y_M(A(w, t h)b,w^{-1})i_{w,z}z^{k}(w-z)^{n}w^{q})\\
&-p(a,b)Res_{z}(Y_M(A(w, t h)b,w^{-1})Y_M(A(z, t h)a,z^{-1})i_{z,w}z^{k}(w-z)^{n}w^{q})\\
&=Res_{z}(Y_M(i_{z,w}Y(A(\frac{wz}{w+z}, t h)a,z^{-1})A(w, t h)b,w^{-1})(w+z)^{-k-n-2}z^{k}w^{q+k+2n+2}.
\end{align*}
Note that $k\in [\gamma_a^{(t)}]$ and $q\in [\gamma_b^{(t)}]$. Hence  
\eqref{needtocheck} turns into
 \begin{align*}
&Res_{z}(i_{z,w}Y_M(Y(A(\frac{wz}{w+z}, t h)a,z^{-1})A(w,t h)b,w^{-1})(w+z)^{-k-n-2}z^{k}w^{q+k+2n+2}\\
&=-p(a,b)(-1)^nRes_{z}(Y_M(A(w,t h)Y(a,z)b,w^{-1}) i_{w,z}(w+z)^kz^nw^q).
\end{align*}
From here on the proof follows verbatim that of Proposition 6.3  \cite{KMP} using \eqref{conjAzzt}.
\end{proof}

\section{Spectral flow for minimal $W$-algebras.}\label{4}

We now specialize to $W^k_{\min}(\g)$ with $\g$ from list \eqref{isunitary}.
In particular we have that 
\begin{enumerate}
\item $\Wu$ admits a Virasoro vector $L$ with $L_0$ acting diagonally. Moreover the gradation of $V$ by conformal weights is
$$\Wu=\bigoplus_{n\in\half\ZZ_+}\Wu_n,$$
with $\Wu_0=\C\vac$ and $\Wu_{\frac{1}{2}}=\{0\}$.
\item $\Wu_{\ov 0}=\oplus_{n\in\ZZ_+}\Wu_n$ and $\Wu_{\ov 1}=\oplus_{n\in\half+\ZZ_+}\Wu_n$.

\end{enumerate}

In \cite[Table 1]{KMP1}, we chose a special set of simple roots for $\g$, denoted by  $S$. It has the property that, if $\D^\natural$ is the set of roots for $(\g^\natural,\h^\natural)$,  then $S\cap\D^\natural$ is a set of simple roots for $\g^\natural$.
Let  
$\g^\natural=\n^\natural_-\oplus\h^\natural\oplus\n^\natural_+$ be the corresponding triangular decomposition and let $\D^\natural_+$ be the corresponding set of positive roots.
Recall that $\g^\natural$ is a semisimple Lie algebra and write $\g^\natural=\oplus_{i=1}^{r_0}\g^\natural_{i}$ for its decomposition into simple ideals. Actually, $\g^\natural$ is simple  except when $\g=D(2,1;a)$, so $r_0=1$ except when $\g=D(2,1;a)$ where $r_0=2$.
We let $\omega^j_i$ be the fundamental weights of $\g^\natural_j$ corresponding to our choice of simple roots. Likewise let $\theta^\natural_i$ be the highest root of $\g^\natural_i$. Let $\rho_R$ be as in Table \ref{thetahalfisnotroot}. These data  appear also in Tables 2,3 from \cite{KMPR}, where one more choice $\rho_R=\omega_3^1 $ in type $F(4)$ appears.

 We need to treat this extra case with a special argument (cf. Lemma \ref{42}). 

\renewcommand{\arraystretch}{1.5}
\begin{table}[h]
\begin{tabular}{c | c| c |c |c}
$\g$&$psl(2|2)$&
$spo(2|2r)$& $D(2,1;\tfrac{m}{n})$&$F(4)$\\
\hline
$\rho_R$&$\omega^1_1$&$\omega^1_r,\omega^1_{r-1}$&$\omega_1^1,\omega_1^2$
&$\omega^1_1$
\end{tabular}
\captionof{table}{The algebras $\g$ and the weights $\rho_R$.\label{thetahalfisnotroot}}
\end{table}
Denote by $(\cdot|\cdot)$ the invariant bilinear form on $\g$ described explicitly in \cite[Table 1]{KMP1}. 
Identify $\h^\natural$ with $(\h^\natural)^*$ using  $(\cdot|\cdot)$ and define $h^R\in\h^\natural$ by
\begin{equation}\label{hoR}
h^R=4\frac{\rho_R}{(\theta_i^\natural|\theta_i^\natural)}\text{ if $\rho_R=\omega^i_j$.}
\end{equation}

\begin{lemma}\label{hRexixts} $ad(h^R)_{|\g^\natural}$ has eigenvalues in $2\Z$ and  $ad(h^R)_{|\g_{-{1}/{2}}}$ has eigenvalues in $2\Z+1$.
\end{lemma}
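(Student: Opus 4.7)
The plan is to reduce both statements to explicit root/weight computations in $\g^\natural$, and then to verify them case by case for the four families in Table \ref{thetahalfisnotroot}.

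Since $\rho_R=\omega^i_j$ is a fundamental weight of the simple ideal $\g^\natural_i$, the element $h^R$ lies in $\h^\natural_i$ and $\ad h^R$ vanishes on $\g^\natural_k$ for $k\ne i$. So it suffices to analyze $\alpha(h^R)$ for $\alpha\in\D^\natural_i$. Using the identification of $\h^\natural$ with $(\h^\natural)^*$ via $(\cdot|\cdot)$ and the defining relation $(\omega^i_j|\alpha_k^\vee)=\delta_{jk}$ of the fundamental weights of $\g^\natural_i$, one computes
\begin{equation*}
\alpha(h^R)=\frac{4(\alpha|\rho_R)}{(\theta^\natural_i|\theta^\natural_i)}=\frac{2c_j(\alpha_j|\alpha_j)}{(\theta^\natural_i|\theta^\natural_i)},
\end{equation*}
where $\alpha=\sum_k c_k \alpha_k$ is the expansion in simple roots of $\g^\natural_i$. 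The first claim thus reduces to checking that for each $\rho_R=\omega^i_j$ in Table \ref{thetahalfisnotroot} the simple root $\alpha_j$ is long, i.e.\ $(\alpha_j|\alpha_j)=(\theta^\natural_i|\theta^\natural_i)$. Concretely: in $\g=psl(2|2)$ and $\g=D(2,1;\tfrac{m}{n})$ the algebra $\g^\natural$ is a sum of copies of $sl(2)$; for $\g=spo(2|2r)$ the algebra $\g^\natural=so(2r)$ is simply laced of type $D_r$; and for $\g=F(4)$ the algebra $\g^\natural=so(7)$ is of type $B_3$, but $\omega^1_1$ is the vector fundamental weight, attached to a long simple root. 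The excluded option $\rho_R=\omega^1_3$ for $F(4)$ corresponds to the short simple root, and is exactly the case that will need the separate argument in Lemma \ref{42}.

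For the second claim, I would use the explicit decomposition of $\g_{-1/2}$ as a $\g^\natural$-module for each family: the $(2,2)$-bimodule for $psl(2|2)$, the vector representation of $so(2r)$ for $spo(2|2r)$, the $(2,2,2)$-module for $D(2,1;\tfrac{m}{n})$, and the spin representation of $so(7)$ for $F(4)$. In each case the highest weight $\mu_0$ of a constituent of $\g_{-1/2}$ pairs with $h^R$ to give $\pm 1$ (for instance $\omega_3(2\omega_1)=1$ in $B_3$ and $\epsilon_i(2\omega_r)=\pm 1$ in $D_r$), and every other weight of the constituent differs from $\mu_0$ by an element of the root lattice of $\g^\natural$, which contributes an even integer by the first claim. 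Hence every weight of $\g_{-1/2}$ pairs with $h^R$ to an odd integer.

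The main obstacle I anticipate is bookkeeping rather than substance: the bilinear form $(\cdot|\cdot)$ is normalized differently on the various simple ideals (most visibly in $D(2,1;a)$, where the normalization depends on $a$), and the identifications of $\g_{-1/2}$ as a $\g^\natural$-module must be read off Table 1 of \cite{KMP1} consistently. The intrinsic ratio $(\alpha_j|\alpha_j)/(\theta^\natural_i|\theta^\natural_i)$ built into the definition \eqref{hoR} of $h^R$ makes these normalizations irrelevant for the parity statement, so the verification is uniform once the structural data for each family are in hand.
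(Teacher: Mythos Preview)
Your proposal is correct. The paper's own proof is much terser: it simply asserts (without further justification) the sharper fact that $\alpha(h^R)\in\{0,\pm 2\}$ for $\g_\alpha\subset\g^\natural$ and $\alpha(h^R)\in\{\pm 1\}$ for $\g_\alpha\subset\g_{-1/2}$, leaving the verification implicit. Your argument supplies the structural reason behind this---that the chosen $\alpha_j$ is always a long simple root, and that the $\g^\natural$-module $\g_{-1/2}$ has highest weight pairing to $\pm 1$ with $h^R$---so the two approaches are the same case-by-case check, with yours spelling out the mechanism. One small point worth noting: the paper's sharper eigenvalue constraint $\{0,\pm2\}$ (equivalently, that $\alpha_j$ occurs with coefficient at most $1$ in $\theta^\natural_i$) is actually used later, e.g.\ in the positive-energy argument preceding Proposition~\ref{Hwumaphwu}, so if you want your write-up to slot directly into the paper you should record that refinement as well; it follows from the same data you already assembled, since in each case $\omega^i_j$ is a (co)minuscule fundamental weight.
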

\begin{proof}
Note that 
\begin{equation}\label{eigenhR}
\a(h^R)\in\{\pm2,0\} {\rm\ if }\ \g_{\a}\subset \g^\natural, \ \a(h^R)\in\{\pm1\} {\rm \ if }\ \g_{\a}\subset \g_{-1/2}.
\end{equation}
\end{proof}

\begin{remark}
Note that Lemma \ref{hRexixts} does not hold if $\g$ is $spo(2|2n+1)$ or $G(3)$, i.e. in the two remaining   cases listed in \cite[Proposition  8.10]{KMP1}. The reason is that in these cases  $0$ is a $\h^\natural$-weight of $\g_{-1/2}$.
\end{remark}

We call a $\Wu$-module {\sl ordinary} if it is $f$-twisted with $f$ the identity on $\Wu$.
\noindent Assume that $M$ is an ordinary $L\langle th^R\rangle$-positive energy module for all $t$ in a subset $I$ of $\R$. 
Let $\omega$ be a conjugate linear involution  on $\Wu$ such that 
\begin{equation}\label{hRReal}
\omega(h^R)=-h^R.
\end{equation}
  Applying Proposition \ref{functor}, we find that $M^\dagger$ is an $L\langle3th^R\rangle$-positive energy $Id\langle t h^R_0\rangle$-twisted module, so, applying Proposition \ref{functor} again, we find that, if $s\in3I$, then $(M^\dagger)^\dagger$ is an $L\langle3sh^R\rangle$-positive energy $(Id\langle t h^R_0\rangle)\langle s h_0^R\rangle$-twisted module. Since
 $$(Id\langle t h^R_0\rangle)\langle s h_0^R\rangle=\omega^{-1}e^{-4\pi\sqrt{-}1sh^R_0}\omega^{-1}e^{-4\pi\sqrt{-}1th^R_0}\omega^2=e^{-4\pi\sqrt{-}1(s+t)h^R_0},
 $$
  we find that, choosing $s=\tfrac{1}{4}-t$, $(M^\dagger)^\dagger$ is a $\s_R$-twisted module.

 There is a canonical linear embedding $M\to (M^\dagger)^\dagger$ given by $m\mapsto F_m$ with $F_m(\lambda)=\ov{\lambda(m)}$. We define a structure of  $\s_R$-twisted $\Wu$-module on $M$ via the fields $Y^R(a,z)$ defined by
  $$
F_{ Y^R(a,z)m}=Y^{(sh^R)}(a,z)(F_m).
$$
 If $a\in\Wu$, write $Y^R(a,z)=\sum_{n\in\Z} a^R_nz^{-n-\D_a}$. If $a\in\g$ is an eigenvector for $ad(h^R)$, write $\eta_a$ for the corresponding eigenvalue. By \eqref{eigenhR}
 \begin{equation}
 \eta_a\in \{\pm2,0\}\text{ if $a\in\g^\natural$}, \ \eta_a\in \{\pm1\}\text{ if $a\in\g_{-1/2}$}.
 \end{equation}
 Recall that there is an embedding $V^{\be_k}(\g^\natural)\to\Wu$, where $\be_k$ is the cocycle given explicitly in \cite[Theorem 5.1]{KW1}.
 
 \begin{lemma}Fix $n\in\ZZ$ and $m\in M$. If $a\in \g^\natural$ and $v\in\g_{-1/2}$, then 
\begin{align}
&(J^{\{ a \}})^{R}_n m=e^{-\frac{\pi}{4}\sqrt{-}1\eta_a}J^{\{a\}}_{n+1/2\eta_a}m+\half\d_{n,0}\beta_k(h^R,a)m,\label{JAR}\\
&(G^{\{ v \}})^{R}_n m  = e^{-\frac{\pi}{4}\sqrt{-}1\eta_v} G^{\{v\}}_{n+1/2\eta_v}m,\label{GAR}\\
&L^{R}_n m  =L_nm+\half J^{\{h^R\}}_nm+\d_{n,0}\tfrac{1}{8}\be_k(h^R,h^R)m\label{LAR}.
\end{align}
\end{lemma}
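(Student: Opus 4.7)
The plan is to derive the three formulas by composing two applications of the explicit mode formula \eqref{undagger} in Lemma \ref{functor}: the first with parameter $t$ to produce $M^\dagger$, and the second with parameter $s=\tfrac14-t$ to produce $(M^\dagger)^\dagger$. After unpacking $F_{Y^R(a,z)m}=Y^{(sh^R)}(a,z)F_m$ and chaining \eqref{undagger} twice, one obtains, for each $a\in\Wu$, a formula
$$a^R_n m = (\text{phase})\cdot b^M_{n+\text{shift}}\,m,$$
where $b$ is the element obtained by twice applying an operator of the form $e^{L(\tau h^R)_1}\omega$ to $a$, and both the phase and the mode-index shift are determined by the $h^R_0$-eigenvalue $\gamma_a$. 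The hypothesis $\omega(h^R)=-h^R$ is used at each step to identify $g_j(L(\tau h^R))=L(-\tau h^R)$, so the two nested exponentials interact cleanly.

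I would then specialise $a$ to the three distinguished vectors $J^{\{b\}}$, $G^{\{v\}}$, and $L$, each of which is primary for $L$ and an $h^R_0$-eigenvector. The phase $e^{-\pi\sqrt{-1}\gamma_a/4}$ and the spectral-flow shift $\tfrac12\gamma_a$ in the mode index arise from the two compositions of $\sigma^{1/2}(-1)^{L(\tau h^R)_0}$ together with the constraint $s+t=\tfrac14$; this produces an effective spectral-flow parameter $u=2(s+t)=\tfrac12$, which is exactly the shift appearing in \eqref{JAR}--\eqref{LAR}. The constant $\tfrac12\beta_k(h^R,b)\delta_{n,0}$ in \eqref{JAR} comes from the expansion of $e^{L(-sh^R)_1}$ applied to $J^{\{b\}}$: using primality and the OPE $[J^{\{h^R\}}_\lambda J^{\{b\}}]=J^{\{[h^R,b]\}}+\lambda\beta_k(h^R,b)\vac$, one computes $L(-sh^R)_1 J^{\{b\}}=2s\beta_k(h^R,b)\vac$, a vector in $\C\vac$ on which further iterations vanish; combined with the symmetric contribution from the other dagger, this yields $\tfrac12\beta_k(h^R,b)$.

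For $G^{\{v\}}$, the same computation gives $L(-\tau h^R)_1 G^{\{v\}}\in\Wu_{1/2}=\{0\}$ by assumption (1) on the grading, so no scalar correction appears, explaining \eqref{GAR}. For $a=L$, formula \eqref{LAR} follows from the identity $L(\tau h^R)_n=L_n-\tau(n+1)J^{\{h^R\}}_n$, which after combining both daggers at $s+t=\tfrac14$ accounts for the linear term $\tfrac12 J^{\{h^R\}}_n$, together with the $\lambda^3$-term of the OPE $[L(\tau h^R)_\lambda L(\tau h^R)]=\partial L(\tau h^R)+2L(\tau h^R)+\tfrac{\lambda^3}{12}(c-12\tau^2\beta_k(h^R,h^R))\vac$, which controls the constant correction. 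The resulting coefficient $\tfrac18\beta_k(h^R,h^R)=\tfrac{u^2}{2}\beta_k(h^R,h^R)$ is the standard Sugawara shift associated to the effective spectral-flow parameter $u=\tfrac12$.

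The main obstacle is the careful bookkeeping of sign conventions, phase factors, and mode-index relabelings across two nested conjugate-linear functors with different Virasoro parameters $t$ and $s$: reconciling the $(-1)^{L(\tau h^R)_0}$ factors acting on vectors of weight $\Delta_a-\tau\gamma_a$, tracking the action of $\omega$ on the various vectors (where $\omega(h^R)=-h^R$ is essential), and aligning the mode expansions $a^M(n,\tau)=a^M_{n-\tau\gamma_a}$ between the $L$- and $L(\tau h^R)$-gradings. Once this bookkeeping is in hand, the three identities \eqref{JAR}--\eqref{LAR} reduce to the OPE computations above together with the vanishing of $\Wu_n$ for $n<0$ and of $\Wu_{1/2}$.
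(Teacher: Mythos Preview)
Your proposal is correct and follows essentially the same route as the paper: compose \eqref{undagger} twice with parameters $t$ and $s=\tfrac14-t$, compute the action of $L(\tau h^R)_1$ on each of $J^{\{a\}}$, $G^{\{v\}}$, $L$ (using primality, $\omega(h^R)=-h^R$, and $\Wu_{1/2}=0$), and track phases and mode shifts to land on the effective spectral-flow parameter $2(s+t)=\tfrac12$. One small imprecision: the constant $\tfrac18\beta_k(h^R,h^R)$ in \eqref{LAR} does not come from the $\lambda^3$-term of the OPE but rather, as in your $J$-case, from iterating $L(\tau h^R)_1$ on $L$ (which produces $2\tau h^R$, then $\beta_k(h^R,h^R)\vac$), yielding $2(s+t)^2\beta_k(h^R,h^R)=\tfrac18\beta_k(h^R,h^R)$; the paper carries this out explicitly.
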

\begin{proof}To simplify notation, in this proof we set $h=h^R$.
 Observe that
 $g(L\langle t h\rangle)=\omega(L\langle t h\rangle)=L-t\partial h$. It follows that,
if $a\in \g^\natural$ and $v\in \g_{-1/2}$, then 
$$
 \begin{aligned}
& g(L\langle t h\rangle)(1,t)g((J^{\{ a \}}))=(L_1-t\partial h_1)g(J^{\{ a \}})=(L_1-t\partial h_1)e^{-\pi\sqrt{-}1\D_a(t)}J^{\{\omega(a)\}}\\
&=2te^{-\pi\sqrt{-}1\D_a(t)}\beta_k(h,\omega(a))\vac,
 \\
  &g(L\langle t h\rangle)(1,t)g(G^{\{ v \}})=(L_1-t\partial h_1)e^{-\pi\sqrt{-}1(\D_v(t)+1/2)}G^{\{\omega( v) \}}=0,\\
   & g(L\langle t h\rangle)(1,t)g(L)=(L_1-t\partial h_1)L=2t h.
 \end{aligned}
 $$
It follows from \eqref{undagger} that, if $a\in g^\natural$ and $n\in\ZZ$,
 $$
 \begin{aligned}
& ((J^{\{ a \}})^{(sh)}_nF_m)(\lambda)= ((J^{\{ a \}})^{(sh)}(n+3s\eta_a,3s)F_m)(\lambda)\\
&= F_m(g(J^{\{ a \}})^{(th)}(-n-3s\eta_a,s)\lambda)\\
&+2se^{\pi\sqrt{-}1\D_a(s)}\ov{\beta_k(h,\omega(a))}F_m(\vac(-n-3s\eta_a,s)\lambda)\\
&= F_m(e^{-\pi\sqrt{-}1\D_a(s)}(J^{\{\omega(a)\}})^{(th)}(-n-3s\eta_a,s)\lambda)\\
&+\d_{n+3s\eta_a,0}2s\ov{e^{-\pi\sqrt{-}1\D_a(s)}\beta_k(h,\omega(a))\lambda(m)}\\
&= \ov{ (e^{-\pi\sqrt{-}1\D_a(s)}(J^{\{\omega(a)\}})^{(th)}(-n-(2s+3t)\eta_a,3t)\lambda)(m)}\\
&+\d_{n+3s\eta_a,0}2s\ov{e^{-\pi\sqrt{-}1\D_a(s)}\beta_k(h,\omega(a))\lambda(m)}\\
&= \ov{ (e^{-\pi\sqrt{-}1\D_a(s)}\lambda(e^{-\pi\sqrt{-}1\D_{\omega(a)}(t)}J^{\{a\}}(n+(2s+3t)\eta_a,t)m)}\\
&+\ov{2te^{-\pi\sqrt{-}1\D_a(s)}\lambda(e^{-\pi\sqrt{-}1\D_{\omega(a)}(t)}\beta_k(h,a)\vac(n+(2s+3t)\eta_a,t)m)}\\
&+\d_{n+3s\eta_a,0}2s\ov{e^{-\pi\sqrt{-}1\D_a(s)}\beta_k(h,\omega(a))\lambda(m)}\\
&= \ov{ (e^{-\pi\sqrt{-}1(\D_a(s)-\D_{\omega(a)}(t))}\lambda(J^{\{a\}}_{n+2(s+t)\eta_a}m)}\\
&+\ov{\left(\d_{n+(2s+3t)\eta_a,0}2te^{-\pi\sqrt{-}1(\D_a(s)-\D_{\omega(a)}(t))}\ov{\beta_k(h,a)}+\d_{n+3s\eta_a,0}2se^{-\pi\sqrt{-}1\D_a(s)}\beta_k(h,\omega(a))\right)\lambda(m)}.
 \end{aligned}
 $$
Set now  $s=\frac{1}{4}-t$ and observe that $\be_k(h,a)=\be_k(h,\omega(a))=0$ unless $\eta_a=0$.   Since $\omega$ is an automorphism of $\Wu$, it follows that  $\be_k(\omega(a),\omega(b))=\ov{(a,b)}$. Moreover,
 $$
 \D_a(s)-\D_{\omega(a)}(t)=1-(\frac{1}{4}-t)\eta_a-1-t\eta_a=-\frac{\eta_a}{4}.
 $$
 Putting together all these observations, our formula reduces to
  $$
 \begin{aligned}
& ((J^{\{ a \}})^{(sh)}_nF_m)(\lambda)
= e^{-\frac{\pi}{4}\sqrt{-}1\eta_a}F_{ J^{\{a\}}_{n+1/2\eta_a}m}(\lambda)+\half\d_{n,0}\beta_k(h,a)F_m(\lambda),
 \end{aligned}
 $$
 which is \eqref{JAR}.
 Similarly one obtains, for $n\in\ZZ$ and $v\in\g_{-1/2}$, 
 $$
 \begin{aligned}
& ((G^{\{ v \}})^{(sh)}_nF_m)(\lambda)= F_m(e^{-\pi\sqrt{-}1(\D_v(s)+1/2)}G^{\{\omega( v) \}}(-n-3s\eta_v,s)\lambda)\\
&= \ov{ e^{-\pi\sqrt{-}1(\D_v(s)+1/2)}G^{\{\omega( v) \}}(-n-3s\eta_v,s)\lambda(m)}\\
&= \ov{ e^{-\pi\sqrt{-}1(\D_v(s)+1/2)}\lambda(e^{-\pi\sqrt{-}1(\D_{\omega(v)}(t)+1/2)}G^{\{v\}}(n+(2s+3t)\eta_v,t)m)}\\
&= \ov{ (e^{-\pi\sqrt{-}1(\D_v(s)-\D_{\omega(v)}(t))}\lambda(G^{\{v\}}_{n+2(s+t)\eta_v}m)}= e^{-\frac{\pi}{4}\sqrt{-}1\eta_v} F_{ G^{\{v\}}_{n+1/2\eta_v}m}(\lambda),
 \end{aligned}
 $$
 which gives \eqref{GAR}.
 Finally, if $n\in\ZZ$,
 $$
  \begin{aligned}
 &(L^{(sh)}_nF_m)(\lambda)=(L^{(sh)}(n,3s)F_m)(\lambda)\\
 &=F_m(\left(L^{(th)}(-n,s)+2s(J^{\{h\}})^{(th)}(-n,s)+\d_{n,0}2s^2\be_k(h,h)\right)\lambda)\\
  &=\ov{\left(L^{(th)}(-n,s)+2s(J^{\{h\}})^{(th)}(-n,s)+\d_{n,0}2s^2\be_k(h,h)\right)\lambda)(m)}\\
  &=\ov{\lambda((L_n+2tJ^{\{h\}}_n+\d_{n,0}2t^2\be_k(h,h))m)}\\
  &+\ov{2s\lambda((J^{\{h\}}_n+\d_{n,0}2t\be_k(h,h))m)} +\d_{n,0}2s^2\ov{\be_k(h,h)\lambda(m)}\\
  &=\ov{\lambda(\left(L_n+2(t+s)J^{\{h\}}_n+\d_{n,0}2(t+s)^2\be_k(h,h)\right)m)}  .
  \end{aligned} 
  $$
  Since $\ov{\be_k(h,h)}=\be_k(\omega(h),\omega(h))=\be_k(h,h)$, we obtain \eqref{LAR}.
\end{proof}

\begin{remark}
Clearly \eqref{JAR}, \eqref{GAR}, and \eqref{LAR} can be inverted to obtain
\begin{align}
&J^{\{a\}}_{n}m=e^{\frac{\pi}{4}\sqrt{-}1\eta_a}((J^{\{ a \}})^{R}_{n-1/2\eta_a} m-\half\d_{n,1/2\eta_a}\beta_k(h^R,a)m),\label{JAOR}\\
&  G^{\{v\}}_{n}m= e^{\frac{\pi}{4}\sqrt{-}1\eta_v}(G^{\{ v \}})^{R}_{n-1/2\eta_v} m ,\label{GAOR}\\
& L_nm=L^{R}_n m -\half (J^{\{h^R\}})^R_nm-\d_{n,0}\tfrac{3}{8}\be_k(h^R,h^R)m\label{LAOR}.
\end{align}
\end{remark}

 
 \section{Spectral flow and unitary highest weight modules}\label{5}
We now assume that $k$ is in the unitary range. This implies that  there is a conjugate linear involution $\omega$ on $\Wu$ and  a semi-positive definite  $\omega$-invariant Hermitian form $(\cdot,\cdot)$ on $\Wu$. In other words the simple $W$-algebra $\Ws$ is a unitary vertex algebra. We normalize this  form by setting $(\vac,\vac)=1$.
 Note that $\omega_{|\g^\natural}$ must be the conjugation with respect to a compact form. This implies in particular that \eqref{hRReal} holds.
  
  Remark that
$\D^{NS}_+=\{\a\in\D^\natural\mid \a(h^R)<0\}\cup \{\a\in\D^\natural_+\mid \a(h^R)=0\}$ is a set of positive roots for $\g^\natural$. Let 
$\mathfrak n^{NS}_-\oplus \h^\natural \oplus \mathfrak n^{NS}_+$ be the corresponding triangular decomposition of $\g^\natural$. 
By  a {\sl highest weight ordinary  $\Wu$-module of highest weight $(\nu,\ell)$ }we mean an ordinary $\Wu$-module $M$ generated by a vector $v_{\nu,\ell}$ such that
\begin{align*}
&J^{\{h\}}_0v_{\nu,\ell}=\nu(h)v_{\nu, \ell}\ {\rm  for} \  h\in\h^\natural,\quad L_0v_{\nu,\ell}=\ell v_{\nu,\ell},\\
&J^{\{u\}}_nv_{\nu,\ell}=G^{\{v\}}_nv_{\nu,\ell}=L_nv_{\nu,\ell}=0 \ {\rm  for} \  n>0, u\in \g^\natural,\ v\in\g_{-1/2}, \\
&J^{\{u\}}_0v_{\nu,\ell}=0\ {\rm  for }\ u\in\n^{NS}_+.
\end{align*}

Let $M$ be a unitary ordinary highest weight module. This implies that the minimal energy space $M_0$ is finite dimensional. Recall that $M$ is linearly spanned by monomials
$$
\begin{aligned}\label{formulona}
&J^{\{a_{1}\}}_{i_1}\cdots J^{\{a_{t}\}}_{i_t}G^{\{v_{1}\}}_{j_1}\cdots G^{\{v_{s}\}}_{j_s}L_{k_1}\cdots L_{k_r}m,
\end{aligned}
 $$
 with $m\in M_0$, $i_q,k_u\in \Z_{<0}$ and $j_h\in\
 \half+\Z_{<0}$.
We can choose $a_i, v_i$ and $m$ to be eigenvectors for the action of $h_0$ and let $\gamma_{a_i}$, $\gamma_{v_i}$ and $\l$ be the corresponding eigenvalues. Since $M_0$ is finite dimensional, $\l$ is bounded below. 
The eigenvalue for $L\langle t h\rangle_0=L_0-th_0$ on these monomials is
$$
-\sum i_q-\sum j_h-\sum k_u-\sum t\gamma_{a_q}-\sum t\gamma_{v_h}+\ell-t\la.
$$
Since the eigenvalues of $ad(h^R)$ on $\g^\natural$ are in $\{\pm2,0\}$, the eigenvalues of  $ad(h^R)$ on $\g_{-1/2}$ are in $\{\pm1\}$, we see that, choosing $|t|<\half$, 
$$
-\sum i_q-\sum j_h-\sum k_u-\sum t\gamma_{a_q}-\sum t\gamma_{v_h}+\ell-t\la\ge\ell-t\la
$$
has a minimum value $s_0$. Let 
 $M_j=\{m\in M\mid (L_0-th_0)m=(j-s_0)m\}$. The grading 
$$
M=\oplus_j M_j
$$
turns $M$ into an $L\langle t h\rangle$-positive energy module.
 Recall from \cite{KMPR} that a $\s_R$-twisted highest weight module is a module  with a cyclic vector $m$ such that 
\begin{align}
  &(J^{\{ a \}})^{M}_n m=(G^{\{ v \}})^{M}_n m=L^{M}_n m=
  0 \text{ if $n>0$ },\label{m3}\\
 &(J^{\{ a \}})^{M}_0 m=
  0 \text{ if 
     $a \in \fn_0 (\sigma_R)_+$},\label{m4}\\
     &(G^{\{ v \}})^{M}_0 m  =
  0 \text{ if 
     $v \in \fn_{-1/2} (\sigma_R)'_+$},\label{m5}
\end{align}
where $\fn_0 (\sigma_R)_+=\sum_{\a\in\D^\natural_+}\g^\natural_\a$ and $\fn_{-1/2} (\sigma_R)'_+=\sum_{\eta\in\ov \D_{1/2}^+}(\g_{-1/2})_\eta$ with the sets $\D^\natural_+$ and $\ov \D_{1/2}^+$  explicitly described in Section 6 of \cite{KMPR}. Here, for a $\h^\natural$-stable  space $\mathfrak r$ and a weight $\eta\in(\h^\natural)^*$, we denote by $\mathfrak r_\eta$ the corresponding weight space. Note that, if $\g\ne psl(2|2)$, there are two choices for the set $\overline\D^+_{1/2}$. 

Recall that we chose $\rho_R$ from Table \ref{thetahalfisnotroot}. 
The choice of $\rho_R$ is equivalent to choosing a set $\overline\D^+_{1/2}$ in \eqref{m5}. Indeed
\begin{equation}\label{roR}
\rho_R=\half\sum_{\eta\in \overline\D^+_{1/2}}\dim(\g_{-1/2})_\eta\eta=\half\!\!\!\!\!\!\!\!\sum_{(\g_{-1/2})_\eta\subset \fn_{-1/2} (\sigma_R)'_+}\!\!\!\!\!\!\!\!\!\!\!\!\dim(\g_{-1/2})_\eta\eta.
\end{equation}
 \begin{proposition}\label{Hwumaphwu}Let $M$ be a unitary highest weight ordinary $\Wu$-module of highest weight $(\nu,\ell)$.  
Choose $h^R$ as in \eqref{hoR} with $\rho_R$ chosen from Table \ref{thetahalfisnotroot}. Then 
\begin{itemize}
 \item[(a)] In the $\s_R$-twisted module  $(Y^R,M)$ the vector $v_{\nu,\ell}$ generates a $\s_R$-twisted highest weight module of highest weight $(\nu^R,\ell^R)$ with
\begin{equation}\label{fr}
 \nu^R=\nu+M_i(k)\rho_R,\ \ell^R=\ell +\frac{2}{(\theta^\natural_i|\theta^\natural_i) }(\nu|\rho_R)+\frac{1}{(\theta^\natural_i|\theta^\natural_i) }M_i(k)(\rho_R|\rho_R),
\end{equation}
where $\rho_R=\omega^i_j$ as in Table \ref{thetahalfisnotroot}. 
 \item[(b)]   If $M$ is irreducible, then $(M^\dagger)^\dagger$ is the irreducible    $\s_R$-twisted highest weight mo\-dule
 with highest weight $(\nu^R,\ell^R)$.\end{itemize}
 \end{proposition}
 \begin{proof}
 If $n\ge2$ and $a\in \g^\natural$, since $\eta_a\in\{\pm2,0\}$,
 by \eqref{JAR}, $(J^{\{a\}})^R_n v_{\nu,\ell}=0$. If $n=1$ and  $\eta_a\ge 0$, by \eqref{JAR}, $(J^{\{a\}})^R_1 v_{\nu,\ell}=0$. If $\eta_a=-2$, then $a\in \mathfrak n(R)_+$ and, by \eqref{JAR}, $(J^{\{a\}})^R_1 v_{\nu,\ell}=-\sqrt{-1}J^{\{a\}}_{0}v_{\nu,\ell}=0$.
 Observe that, if $a\in \mathfrak n_+$ then $\eta_a\ge 0$. If $n=0$ and $\eta_a=2$, then $(J^{\{a\}})^R_0 v_{\nu,\ell}=-\sqrt{-1}J^{\{a\}}_{1}v_{\nu,\ell}=0$. If $n=0$ and $\eta_a=0$ then $a\in \mathfrak n(R)_+$, so $(J^{\{a\}})^R_0 v_{\nu,\ell}=-\sqrt{-1}J^{\{a\}}_{0}v_{\nu,\ell}=0$.
 
 If $n>0$ and $v\in \g_{-1/2}$, since $\eta_v\in\{\pm1\}$, by \eqref{GAR}, $(G^{\{v\}})^R_n v_{\nu,\ell}=e^{-\frac{\pi}{4}\sqrt{-}1\eta_v} G^{\{v\}}_{n+1/2\eta_v} v_{\nu,\ell}=0$.
 If $n=0$ and $v \in \fn_{-1/2} (\sigma)'_+$ then $\eta_v=1$ so, by \eqref{GAR}, $(G^{\{v\}})^R_0 v_{\nu,\ell}=e^{-\frac{\pi}{4}\sqrt{-}1} G^{\{v\}}_{1/2} v_{\nu,\ell}=0$.
 
 If $n>0$ then, by \eqref{LAR}, 
 $$L^{R}_n v_{\nu,\ell}  =L_nv_{\nu,\ell}+\half J^{\{h^R\}}_nv_{\nu,\ell}+\d_{n,0}\tfrac{1}{8}\be_k(h^R,h^R)v_{\nu,\ell}=0.
 $$
 
 If $a\in \h^\natural$, then, by \eqref{JAR},
 $$
 (J^{\{ a \}})^{R}_0 v_{\nu,\ell}=J^{\{a\}}_{0}v_{\nu,\ell}+\half\beta_k(h^R,a)v_{\nu,\ell}=(\nu(a)+\half\beta_k(h^R,a))v_{\nu,\ell}.
 $$
 By \cite[(7.22)]{KMP1} if $h^R\in\g^\natural_i$ and $a\in\g^\natural_j$, we have
 $$
\be_k(h^R,a)= \d_{i,j} M_i(k)\frac{(\theta_i|\theta_i)}{2}(h^R|a)=\d_{i,j} 2M_i(k)\rho_R(a).
 $$
 By \eqref{LAR},
 $$
 L^{R}_0 v_{\nu,\ell}=(\ell +\half \nu(h^R)+\tfrac{1}{8}\be_k(h^R,h^R)) v_{\nu,\ell}=(\ell +\frac{2}{(\theta^\natural_i|\theta^\natural_i) }(\nu|\rho_R)+\frac{1}{(\theta^\natural_i|\theta^\natural_i) }M_i(k)(\rho_R|\rho_R)) v_{\nu,\ell}.
 $$
 
 Assume now that $M$ is irreducible. If $N$ is a  proper submodule of $(M^\dagger)^\dagger$, then it is graded by $L\langle9th\rangle_0$. It follows that $N^\perp$ is generated by $M_0^\dagger$, hence $(N^\perp)^\perp$ is a graded proper submodule of $M$ such that $(N^\perp)^\perp\cap M_0=\{0\}$. Thus $N=\{0\}$ and $(M^\dagger)^\dagger$ is irreducible. 
 \end{proof}
Let $P^k_+(R)\subset (\h^\natural)^*$ be the set of dominant integral weights for $\Delta^\natural_+$ such that $\nu((\theta^\natural_i)^\vee)\le M_i(k)$ for all $i$.
Let also $P^k_+(NS)\subset (\h^\natural)^*$ be the set of dominant integral weights for $\D^{NS}_+$ such that $\nu((\theta^\natural_i(NS))^\vee)\le M_i(k)$ for all $i$, where  $\theta^\natural_i(NS)$ is the highest root of $\g^\natural_i$ in $\D^{NS}_+$.
Recall that  the ordinary highest weight module $L(\nu,\ell)$ can be unitary only if $\nu\in P^k_+(NS)$ and $\ell\ge  A^{NS}(k,\nu)$ with $ A^{NS}(k,\nu)$ given by \cite[(8.11)]{KMP1}.
 
Similarly, a $\s_R$-twisted highest weight module $L^R(\nu,\ell)$ can be unitary only if $\nu\in P^k_+(R)$ and $\ell\ge A^R(k,\nu)$ with
 $ A^R(k,\nu)$ given by \cite[(6.31)]{KMPR}. Explicit expressions for both $A^{NS}(k,\nu)$ and $A^R(k,\nu)$ are given case by case in \cite[Section 12]{KMP1} and \cite[Section 10]{KMPR} respectively.
\begin{lemma}\label{remarkable} Let $\nu\in P^k_+(NS)$  and set $ \nu^R=\nu+M_i(k)\rho_R$ (as in Proposition \ref{Hwumaphwu}) with $\rho_R$ chosen from Table \ref{thetahalfisnotroot}.
Then $\nu^R\in P^k_+(R)$ and
\begin{equation}\label{mf}
A^{NS}(k,\nu) +\frac{2}{(\theta^\natural_i|\theta^\natural_i) }(\nu|\rho_R)+\frac{1}{(\theta^\natural_i|\theta^\natural_i) }M_i(k)(\rho_R|\rho_R)=A^R(k,\nu^R).
\end{equation}
\end{lemma}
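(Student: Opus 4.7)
The plan is to verify the two assertions of Lemma \ref{remarkable} separately, each by direct computation. For the first claim $\nu^R \in P^k_+(R)$, I would observe that $\Delta^{NS}_+$ differs from $\Delta^\natural_+$ only by flipping those roots $\alpha \in \Delta^\natural$ with $\alpha(h^R) > 0$, i.e.\ with $\gamma_\alpha = 2$ in the language of Lemma \ref{hRexixts}; thus $\rho_R$ is, up to the normalization in \eqref{hoR}, essentially half the sum of the flipped roots, and its addition compensates the chamber change. Concretely I would check, in each of the four types listed in Table \ref{thetahalfisnotroot} (and for each admissible choice of $\rho_R$), that $\nu + M_i(k)\rho_R$ is $\Delta^\natural_+$-dominant and satisfies $\nu^R((\theta^\natural_\ell)^\vee) \le M_\ell(k)$, using the NS-dominance of $\nu$ and the small integer values of $\rho_R((\theta^\natural_\ell)^\vee)$.

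For the energy identity \eqref{mf}, I would substitute the explicit expressions for $A^{NS}(k,\nu)$ from \cite[Section 12]{KMP1} and $A^R(k,\nu^R)$ from \cite[Section 10]{KMPR}. The conceptual source of the identity is transparent: by \eqref{LAR}, the $L^R_0$-eigenvalue on the highest weight vector $v_{\nu,\ell}$ of any ordinary highest weight $\Wu$-module of highest weight $(\nu,\ell)$ equals $\ell + \tfrac{1}{2}\nu(h^R) + \tfrac{1}{8}\beta_k(h^R,h^R)$, which by $h^R = 4\rho_R/(\theta^\natural_i|\theta^\natural_i)$ and $\beta_k(h^R,a) = 2M_i(k)\rho_R(a)$ (from \cite[(7.22)]{KMP1}, as used in the proof of Proposition \ref{Hwumaphwu}) simplifies to $\ell + \tfrac{2}{(\theta^\natural_i|\theta^\natural_i)}(\nu|\rho_R) + \tfrac{1}{(\theta^\natural_i|\theta^\natural_i)}M_i(k)(\rho_R|\rho_R)$; so when $\ell = A^{NS}(k,\nu)$ and Proposition \ref{Hwumaphwu}(a) identifies $(\nu^R,\ell^R)$ as the highest weight in the $\sigma_R$-twisted picture, \eqref{mf} is just the equality $\ell^R = A^R(k,\nu^R)$. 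Algebraically, writing both minimal energies in the standard form $A^\bullet(k,\nu) = (\nu|\nu + 2\hat\rho^\bullet)/(2(k+h^\vee)) + c^\bullet$, the identity reduces to the normalization $M_i(k)(\theta^\natural_i|\theta^\natural_i) = 2(k+h^\vee)$ together with the Weyl-vector relation $\hat\rho^R - \hat\rho^{NS} = \rho_R$ between the two positive systems.

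The main obstacle is the volume of explicit calculation. The closed forms for $A^{NS}$ and $A^R$ in \cite{KMP1, KMPR} are not uniform across types and, in the cases $spo(2|2r)$, $D(2,1;\tfrac{m}{n})$, and $F(4)$, they are piecewise on subregions of the weight lattice. So each of the four types (with both admissible values of $\rho_R$ where applicable) must be verified individually, tracking $(\rho_R|\rho_R)$, $(\nu|\rho_R)$, $(\theta^\natural_i|\theta^\natural_i)$, and the specific $M_i(k)$. None of the individual checks is difficult, but the bookkeeping is substantial; the conceptual derivation above should serve as a reliable guide for what each case must produce.
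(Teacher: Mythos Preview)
Your plan is sound and, for the energy identity \eqref{mf}, essentially coincides with the paper's proof: both proceed by substituting the explicit closed forms of $A^{NS}(k,\nu)$ and $A^R(k,\nu^R)$ from \cite[\S12]{KMP1} and \cite[\S10]{KMPR} and simplifying case by case through $psl(2|2)$, $spo(2|2r)$ (both choices of $\rho_R$), $D(2,1;\tfrac{m}{n})$ (both choices), and $F(4)$.

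The genuine difference is in the first assertion $\nu^R\in P^k_+(R)$. The paper does \emph{not} check dominance and level bounds root by root; instead it observes that $\rho_R=\omega^i_j$ is always a minuscule fundamental weight (the coefficient of $\alpha_j$ in $\theta^\natural_i$ is $1$), so the affine translation $t_{\rho_R}$ lies in the extended affine Weyl group, and combines this with $\Delta^{NS}_+=w^j_0w_0(\Delta^\natural_+)$ to get $t_{\rho_R}w^j_0w_0(P^+_k(R))=P^+_k(R)$ from \cite{IwMa,Compatible}. This handles all types uniformly in one line. Your direct verification would also succeed and is more elementary, but requires separate treatment of the two simple factors when $\g=D(2,1;a)$ (the translation by $M_i(k)\rho_R$ only moves the $i$-th block).

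Two small cautions. First, your ``conceptual source'' paragraph is a reformulation of what must be proven (that spectral flow carries the unitarity boundary to the unitarity boundary), not an independent argument; the proposed reduction to $M_i(k)(\theta^\natural_i|\theta^\natural_i)=2(k+h^\vee)$ plus a Weyl-vector shift does not hold uniformly because the ambient form $(\cdot|\cdot)$ on $\g$ and the normalized form on $\g^\natural$ differ by the factor $(\theta^\natural_i|\theta^\natural_i)/2$ and the constants $c^\bullet$ carry their own $k$-dependence, so do not attempt to bypass the case analysis with it. Second, the formulas in \cite{KMP1,KMPR} are single polynomial expressions in each type rather than piecewise, so the bookkeeping is lighter than you suggest.
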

\begin{proof}Let $\widehat \g^\natural=\left(\C[t,t^{-1}]\otimes \g^\natural\right)\oplus(\oplus_{i=0}^{r_0}\C K_i)\oplus \C \frac{d}{dt}$ be the affinization of $\g^\natural$ and set $\ha^\natural=\h^\natural\oplus(\oplus_{i=0}^{r_0}\C K_i)\oplus \C \frac{d}{dt}$. Let $(\cdot|\cdot)^\natural$ be the  invariant symmetric bilinear form on $\g^\natural$ such that $(\theta^\natural_i|\theta^\natural_i)^{\natural}=2$ for all $i$. If $\rho_R=\omega^i_j$   set, for $\l\in(\ha^\natural)^*$,
$$
t_{\rho_R}(\l) = \l+ \l( K_i)\rho_R-((\l_{|\h^\natural}|\rho_R)^\natural +\half(\rho_R|\rho_R)^\natural\l(K_i))\d.
$$
We note that $\rho_R=\omega^i_j$ with $\a_j$ a simple root for $\g^\natural$ such that, if $\theta^\natural_i=\sum_{r\in\Z_+} a_r\a_r$ then $a_j=1$. Let
$\D^\natural(j)$ denote the root subsystem of $\D^\natural$ generated by
$S\setminus\{\a_j\}$ and by $w^j_0$ the longest (w.r.t.
$S\setminus\{\a_j\}$) element of the
corresponding parabolic subgroup of $W$. Let $w_{0}$ be the longest
element of $W$ with respect to $S$. Then it is observed in \cite[Theorem D]{Compatible} that from the results of \cite{IwMa} one deduces that $t_{\rho_R}w^j_0w_0(P^+_k(R))=P^+_k(R)$. Note that $\D^{NS}_+=w^j_0w_0(\D^\natural_+)$, so if $\nu\in P^+_k(NS)$ then $\nu=w_0^jw_0(\nu')$ with  $\nu'\in P^+_k(R)$ so $\nu^R=t_{\rho_R}(\nu)=t_{\rho_R}w_0^jw_0(\nu')\in P^+_k(R)$.

We will prove \eqref{mf} by a case by case inspection using the explicit expressions for $A^{NS}(k,\nu)$ and $A^R(k,\nu)$ given in \cite[Section 12]{KMP1} and \cite[Section 10]{KMPR} respectively.

\subsection{$psl(2|2)$} In this case $\g^\natural=sl(2)$, $M_1(k)=-k-1$. We choose
$\D^\natural_+=\{\d_1-\d_2\}$ so that  
$$\rho_R=\half(\d_1-\d_2), \ h^R=-2\rho_R,\ \D^{NS}_+=\{-\d_1+\d_2\}, 
$$
and
$$
 P^+_k(NS)=\{-\frac{r}{2}(\d_1-\d_2)\mid 0\le r\le M_1(k)\},\ P^+_k(R)=\{\frac{r}{2}(\d_1-\d_2)\mid 0\le r\le M_1(k)\}.
 $$
 In this case
$$
A^{NS}(k,\nu)=\frac{r}{2},\quad
A^R(k,\nu^R)= -\frac{k+1}{4},
$$
and, indeed,
$$
A^{NS}(k,\nu) +\frac{2}{(\theta^\natural_i|\theta^\natural_i) }(\nu|\rho_R)+\frac{1}{(\theta^\natural_i|\theta^\natural_i) }M_i(k)(\rho_R|\rho_R)=\frac{r}{2}-\frac{r}{2}+\frac{1}{4}(-k-1)=A^R(k,\nu^R).
$$

\subsection{$spo(2|2r)$, $r>2$}In this case
$$\g^\natural=so(2r),\ M_1(k)=-2k-1.
$$
Assume first $\rho_R=\omega_r^1$. 
 Then
$$
P^+_k(R)=\{\nu=\sum_i m_i\e_i,\,m_i\in\half +\ZZ\text{ or }m_i\in\ZZ,\ m_1\geq\ldots\geq m_{r-1}\geq |m_r|,\ m_1+m_2\le M_1(k)\},
$$
and 
$$
P^+_k(NS)=\{\nu=\sum_i m_i\e_i,\,m_i\in\half +\ZZ\text{ or }m_i\in\ZZ,\ -|m_1|\geq\ldots\geq m_{r-1}\geq m_r,\ -m_{r-1}-m_r\le M_1(k)\}.
$$
Since
\begin{equation*}\label{1a}
A^{NS}(k,\nu)=-\frac{(\sum_{i=1}^r (m_i^2-2m_i(i-1))+m_r (2k-m_r+2)}{4 (k-r+2)},
\end{equation*}
\begin{align*}
A^R(k,\nu)&=\frac{-4\left( \sum_{i=1}^{r-1}(2 (r-i)-1)m_i +m_i^2\right)-4 k^2+2(r-4) k  +r-3}{16 (k+2-r)},
\end{align*}
and
 $$
 \frac{2}{(\theta^\natural_1|\theta^\natural_1) }(\nu|\rho_R)=\half \sum m_i,\quad\frac{1}{(\theta^\natural_1|\theta^\natural_1) }M_1(k)(\rho_R|\rho_R)=\frac{M_1(k)}{8}r,
 $$
 \eqref{mf} reads
 \begin{equation}\label{equivalent}
\begin{aligned}
&\frac{-4\left( \sum_{i=1}^{r-1}(2 (r-i)-1)(m_i+\half(-2k-1) )+(m_i+\frac{1}{2}(-2k-1))^2\right)}{16 (k+2-r)}\\
&-\frac{4 k^2-2(r-4) k  -r+3}{16 (k+2-r)}\\
&=\frac{-4(\sum_{i=1}^r (m_i^2-2m_i(i-1)+m_r (2k-m_r+2)) +8 (k-r+2) \sum m_i}{16 (k-r+2)}\\
&+\frac{2r(-2k-1) (k-r+2)}{16 (k-r+2)},
\end{aligned}
\end{equation}
or, equivalently,
\begin{equation}\label{equivalentbis}
\begin{aligned}
&\sum_{i=1}^{r-1}\left( 8 k m_i-8 i k-8 r m_i-4m_i^2+8 i m_i+8 m_i-4i-4k^2+8 k r-8 k+4r-3\right)\\
&-4 k^2+2(r-4) k  +r-3\\
&=\sum_{i=1}^{r-1}\left(8 k m_i-8 r m_i-4 m_i^2+8 i m_i+8 m_i\right)+2r(-2k-1) (k-r+2),
\end{aligned}
\end{equation}
which is readily verified.

If $\rho_R=\omega_{r-1}^1$,
then  \eqref{mf} is obtained from \eqref{equivalent} by changing $m_r$ with $-m_r$. Since \eqref{equivalent} is equivalent to \eqref{equivalentbis} and the latter equation does not depend on $m_r$, we deduce that \eqref{mf} is satisfied in this case as well.

\subsection{$D(2,1,\tfrac{m}{n})\  \text{\rm with $m,n$ coprime}$}
In this case $k=-\frac{mn}{m+n}t$ with $t\in\nat$,
$$\g^\natural=sl(2)\times sl(2),\ M_1(k)=mt-1,\ M_2(k)=nt-1.
$$
Assume first $\rho_R=\omega_1^1$. 
 Then
$$
P^+_k(R)=\{\nu=\sum_{i=1}^2 m_i\e_{i+1}\mid m_i\in \ZZ_+,\ 0\le m_i\le M_i(k)\},
$$
and 
$$
P^+_k(NS)=\{\nu=-m_1\e_2+m_2\e_3\mid m_i\in \ZZ_+,\ 0\le m_i\le M_i(k)\}.
$$
Since
\begin{equation*}
A^{NS}(k,\nu)=\frac{ (m_1-
  m_2 )^2+2t(
  m_2 m + m_1
 n)}{4 (m+n )t},
\end{equation*}
\begin{align*}
A^R(k,\nu)=\frac{(1 + m_1 + m_2)^2  + t (-m-n + mnt)}{4 (m + n) t},
\end{align*}
and, if $\nu=-m_1\e_2+m_2\e_3\in P^+_k(NS)$, 
 $$
 \frac{2}{(\theta^\natural_1|\theta^\natural_1) }(\nu|\rho_R)=-\frac{m_1}{2},\quad\frac{1}{(\theta^\natural_1|\theta^\natural_1) }M_1(k)(\rho_R|\rho_R)=\frac{M_1(k)}{4},
 $$
it follows that \eqref{mf} becomes
$$
m^2 t^2+m n t^2-m t-n t-2 m m_1 t+2 m m_2 t+m_1^2+m_2^2-2
   m_1 m_2=t (m n t-m-n)+\left(m t-m_1+m_2\right)^2,
$$
which is readily verified.

If $\rho_R=\omega_2^1$, then
$$
P^+_k(NS)=\{\nu=m_1\e_2-m_2\e_3\mid m_i\in \ZZ_+,\ 0\le m_i\le M_i(k)\},
$$
and, if $\nu=m_1\e_2-m_2\e_3\in P^+_k(NS)$, 
 $$
 \frac{2}{(\theta^\natural_2|\theta^\natural_2) }(\nu|\rho_R)=-\frac{m_2}{2},\quad\frac{1}{(\theta^\natural_2|\theta^\natural_2) }M_1(k)(\rho_R|\rho_R)=\frac{M_2(k)}{4}.
 $$
and we argue as above.

  \subsection{$F(4)$}\label{F4}
In this case 
$\g^\natural=so(7)$, $M_1(k)=-\frac{3}{2} k-1$ and $\rho_R=\omega_1^1$. Then, if we write $\nu=r_1\epsilon_1+r_2\epsilon_2+r_3\epsilon_3$ with $\epsilon_i$ as in \cite[Table 1]{KMP1}, we have
{\small
$$
P^+_k(R)=\{m_1\epsilon_1+m_2\epsilon_2+m_3\epsilon_3\mid m_i\in \ZZ_+\text{ or }m_i\in\half  \ZZ_+,\ m_1\ge m_2\ge m_3\ge0,\ m_1+m_2\le M_1(k)\}
$$}
and
{\small
$$
P^+_k(NS)=\{-m_1\epsilon_1+m_2\epsilon_2+m_3\epsilon_3\mid m_i\in \ZZ_+\text{ or }m_i\in\half  \ZZ_+,\ m_1\ge m_2\ge m_3\ge0,\ m_1+m_2\le M_1(k)\}.
$$}

Since{\small
\begin{align*}\label{1c}
A^{NS}(k,\nu)= \frac{m_1 (6-\tfrac{3}{2}k)+m_2 (3-\tfrac{3}{2}k)+m_3
   (-\tfrac{3}{2}k)+m_1^2+m_2^2+m_3^2-m_1m_2-m_1m_3-m_2m_3}{3 (3-\tfrac{3}{2}k)},
\end{align*}
\begin{align*}
A^R(k,\nu)&=-\frac{9 k^2+8 m_1^2+8 m_1 (m_2+m_3+5)+8 m_2^2-8 m_2
   m_3+32 m_2+8 m_3^2+8 m_3-4}{36 (k-2)},
   \end{align*}}
and, if $\nu=-m_1\e_1+m_2\e_2+m_3\e_3\in P^+_k(NS)$, 
 $$
 \frac{2}{(\theta^\natural_1|\theta^\natural_1) }(\nu|\rho_R)=-m_1,\quad\frac{1}{(\theta^\natural_1|\theta^\natural_1) }M_1(k)(\rho_R|\rho_R)=\frac{M_1(k)}{2},
 $$
Since $\nu^R=(M_1(k)-m_1)\e_1+m_2\e_2+m_3\e_3$, it follows that \eqref{mf} becomes
{\small
  $$
  \begin{aligned}
&4 \frac{m_1 (12-3k)+m_2 (6-3k)+m_3
   (-3k)+2m_1^2+2m_2^2+2m_3^2-2m_1m_2-2m_1m_3-2m_2m_3}{36 (2-k)}\\
   &-m_1+\half M_1(k)\\
   &=
   \frac{9 k^2+8 (M_1(k)-m_1)^2+8 (M_1(k)-m_1) (m_2+m_3+5)+8 m_2^2-8 m_2
   m_3+32 m_2+8 m_3^2+8 m_3-4}{36 (2-k)}
   \end{aligned}
 $$}
 and this formula holds.
\end{proof}

First we observe that the spectral flow maps unitary modules to unitary modules.
Recall  that a $\Wu$-module $M$ (ordinary or $\s_R$-twisted) is unitary if it admits a positive definite  Hermitian form $H$ such that
\begin{equation}
\begin{aligned}
H(m,J^{\{a\}}_nm)&=-H(J^{\{\omega(a)\}}_{-n}m,{m^\dagger}),\\
H(m,G^{\{v\}}_nm)&=H(G^{\{\omega(v)\}}_{-n}m,{m^\dagger}),\\
H(m,L_nm)&=H(L_{-n}m,{m^\dagger}).
\end{aligned}
\end{equation}

\begin{lemma}\label{sfunitary}
 $(Y_M,M)$ is an ordinary  unitary module if and only if $(Y^R,M)$ is a $\s_R$-twisted unitary module. 
\end{lemma}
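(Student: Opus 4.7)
The plan is to use the explicit conversion formulas \eqref{JAR}--\eqref{LAR} (and their inverses \eqref{JAOR}--\eqref{LAOR}) to show that one and the same positive definite Hermitian form $H$ on $M$ witnesses unitarity of $(Y_M,M)$ if and only if it witnesses unitarity of $(Y^R,M)$. Since each $\s_R$-twisted mode is, up to a constant phase and an additive scalar shift, a single ordinary mode, the matter reduces to a bookkeeping check of signs and phase factors.

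First I would record two elementary structural facts about $\omega$ that will do all the work. Since $\omega(h^R)=-h^R$ and $[h^R,a]=\gamma_a a$ implies $[h^R,\omega(a)]=-\gamma_a\omega(a)$, one has $\gamma_{\omega(a)}=-\gamma_a$ for every $\ad(h^R)$-eigenvector $a$. Since $\omega$ is a conjugate linear automorphism of $\Wu$ preserving the invariant bilinear form, $\overline{\be_k(a,b)}=\be_k(\omega(a),\omega(b))$, and therefore
\[
\overline{\be_k(h^R,a)}=-\be_k(h^R,\omega(a)),\qquad \overline{\be_k(h^R,h^R)}=\be_k(h^R,h^R).
\]

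Assuming $(Y_M,M)$ is unitary with Hermitian form $H$, I then take $H$-adjoints in \eqref{JAR}, \eqref{GAR}, \eqref{LAR}, using $(J^{\{a\}}_n)^{*}=-J^{\{\omega(a)\}}_{-n}$, $(G^{\{v\}}_n)^{*}=G^{\{\omega(v)\}}_{-n}$, $(L_n)^{*}=L_{-n}$, and the fact that a scalar $c$ has $H$-adjoint $\bar c$. For \eqref{JAR} this yields
\[
((J^{\{a\}})^R_n)^{*} = -e^{\pi\sqrt{-1}\gamma_a/4}\,J^{\{\omega(a)\}}_{-n-\gamma_a/2}-\tfrac12\d_{n,0}\be_k(h^R,\omega(a)),
\]
and applying \eqref{JAR} with $a$ replaced by $\omega(a)$ (so that $\gamma_a$ is replaced by $-\gamma_a$) identifies the right-hand side with $-(J^{\{\omega(a)\}})^R_{-n}$. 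The verification for \eqref{GAR} is the same but cleaner, since the scalar shift is absent. For \eqref{LAR}, the adjoint of $\tfrac12 J^{\{h^R\}}_n$ is $-\tfrac12 J^{\{\omega(h^R)\}}_{-n}=\tfrac12 J^{\{h^R\}}_{-n}$ precisely because $\omega(h^R)=-h^R$, and the scalar term is unaffected by conjugation. Hence $(L^R_n)^{*}=L^R_{-n}$. The converse implication is obtained by the symmetric computation, starting from \eqref{JAOR}--\eqref{LAOR} (which have the same structural shape as \eqref{JAR}--\eqref{LAR}) and applying the $H$-adjoint on each side.

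There is no substantive obstacle: the proof is an explicit verification with signs and phases. The only non-cosmetic inputs are the two identities $\gamma_{\omega(a)}=-\gamma_a$ and $\overline{\be_k(h^R,a)}=-\be_k(h^R,\omega(a))$, both immediate from $\omega(h^R)=-h^R$; these ensure the nontrivial cancellation of the anomalous scalar $\tfrac12\be_k(h^R,a)$ in \eqref{JAR} against the minus sign in the ordinary adjoint identity for $J^{\{a\}}_n$.
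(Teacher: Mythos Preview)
Your proposal is correct and follows essentially the same approach as the paper: both arguments plug the conversion formulas \eqref{JAR}--\eqref{LAR} into the adjoint conditions, using exactly the identities $\gamma_{\omega(a)}=-\gamma_a$ and $\overline{\be_k(h^R,a)}=-\be_k(h^R,\omega(a))$ (the latter from $\be_k(\omega(\cdot),\omega(\cdot))=\overline{\be_k(\cdot,\cdot)}$ and $\omega(h^R)=-h^R$), and handle the converse via \eqref{JAOR}--\eqref{LAOR}. The only difference is cosmetic: you phrase things in terms of $H$-adjoints $(\,\cdot\,)^{*}$ while the paper works directly with $H(m,\cdot\,m')$.
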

\begin{proof}
First, by \eqref{JAR},
$$
\begin{aligned}
&H(m,(J^{\{ a \}})^{R}_n {m^\dagger})=e^{-\frac{\pi}{4}\sqrt{-}1\eta_a}H(m,J^{\{a\}}_{n+1/2\eta_a}{m^\dagger})+\half\d_{n,0}\beta_k(h^R,a)H(m, {m^\dagger})\\
&=-e^{-\frac{\pi}{4}\sqrt{-}1\eta_a}H(J^{\{\omega(a)\}}_{-n-1/2\eta_a}m,{m^\dagger})+\half\d_{n,0}H(\ov{\beta_k(h^R,a)}m, {m^\dagger})\\
&=-H(e^{-\frac{\pi}{4}\sqrt{-}1\eta_{\omega(a)}}J^{\{\omega(a)\}}_{-n+1/2\eta_{\omega(a)}}m,{m^\dagger})-\half\d_{n,0}H(\beta_k(h^R,\omega(a))m ,{m^\dagger})\\
&=-H((J^{\{\omega(a)\}})^R_{-n}m,{m^\dagger}).
\end{aligned}
$$
Next, by \eqref{GAR},
$$
\begin{aligned}
&H(m,(G^{\{ v \}})^{R}_n {m^\dagger})=e^{-\frac{\pi}{4}\sqrt{-}1\eta_v} H(m, G^{\{v\}}_{n+1/2\eta_v}{m^\dagger})\\
&=e^{-\frac{\pi}{4}\sqrt{-}1\eta_v}H(G^{\{\omega(v)\}}_{-n-1/2\eta_v}m,{m^\dagger})=H(e^{-\frac{\pi}{4}\sqrt{-}1\eta_{\omega(v)}}G^{\{\omega(v)\}}_{-n+1/2\eta_{\omega(v)}}m,{m^\dagger})\\
&=H((G^{\{\omega(v)\}})^R_{-n}m,{m^\dagger}).
\end{aligned}
$$
Finally, since $\be_k(h^R,h^R)\in\R$, by \eqref{LAR},
$$
\begin{aligned}
&H(m,L^{R}_n {m^\dagger})=H(m,L_n{m^\dagger})+\half H(m,J^{\{h^R\}}_n{m^\dagger})+\d_{n,0}\tfrac{1}{8}\be_k(h^R,h^R)H(m,{m^\dagger})\\
&=H(L_{-n}m,{m^\dagger})+\half H(J^{\{h^R\}}_{-n}m,{m^\dagger})+\d_{n,0}\tfrac{1}{8}H(\be_k(h^R,h^R)m,{m^\dagger})=H(L^R_{-n}m,{m^\dagger}).
\end{aligned}
$$
This proves that $(Y^R,M)$ is unitary. The reverse statement is obtained by the same argument using \eqref{JAOR}, \eqref{GAOR}, and \eqref{LAOR}.
\end{proof}

In Proposition \ref{Hwumaphwu} and Lemma \ref{remarkable} we restricted ourselves to $\rho_R$ from Table \ref{thetahalfisnotroot}. According to \cite{KMPR}, if $\g=F(4)$, one can choose also  $\rho_R=\omega^1_3$. To deal with this case we need the following result. If $\psi$ is a weight for $\g_{-1/2}$, we denote by $v_\psi$ a corresponding weight vector. 
\begin{lemma}\label{42}Let  $\g=F(4)$ and $\rho_R=\omega^1_3$. Let  $M$ be a $\s_R$-twisted highest weight module of highest weight $(\nu,\ell)$ such that \eqref{m5} holds with 
$$\ov \D_{1/2}=\{\half(\e_1+\e_2+\e_3),\half(\e_1-\e_2+\e_3),\half(\e_1+\e_2-\e_3),\half(\e_1-\e_2-\e_3)\}.
$$
Set $v=v_{\frac{1}{2}}(-\e_1+\e_2+\e_3)$. If $G^{\{v\}}_0v_{\nu,\ell}\ne0$, then  it is a highest weight vector satisfying  \eqref{m5} with 
$$\ov \D_{1/2}=\{\half(\e_1+\e_2+\e_3),\half(\e_1-\e_2+\e_3),\half(\e_1+\e_2-\e_3),\half(-\e_1+\e_2+\e_3)\}.
$$
Its   highest weight is $(\nu',\ell)$, where 
$\nu'=\nu-\omega_1^1+\omega_3^1$.
\end{lemma}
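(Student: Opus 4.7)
The plan is to verify directly that, under the hypothesis $w:=G^{\{v\}}_0 v_{\nu,\ell}\ne 0$, the vector $w$ satisfies the defining conditions \eqref{m3}, \eqref{m4}, \eqref{m5} of a $\s_R$-twisted highest weight vector, the last with respect to the new $\ov\D^+_{1/2}$ specified in the statement. All steps reduce to evaluating (anti)commutators of $G^{\{v\}}_0$ with generating currents of $\Wu$, which are controlled by the $\l$-brackets of \cite[Theorem 5.1]{KW1}.

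Since $G^{\{v\}}$ has conformal weight $3/2$, $[L_0,G^{\{v\}}_0]=0$, so the $L_0$-eigenvalue of $w$ is $\ell$. For $h\in\h^\natural$, $[J^{\{h\}}_0,G^{\{v\}}_0]=G^{\{[h,v]\}}_0=\psi(h)G^{\{v\}}_0$ with $\psi=\half(-\e_1+\e_2+\e_3)$, so $w$ has $\h^\natural$-weight $\nu+\psi=\nu-\omega_1^1+\omega_3^1=\nu'$. For $n>0$, the conditions $L_n w=J^{\{a\}}_n w=G^{\{v'\}}_n w=0$ follow by commuting or anti-commuting the current past $G^{\{v\}}_0$: the resulting $G^{\{v\}}_0(\cdot)v_{\nu,\ell}$ term vanishes by \eqref{m3} for $v_{\nu,\ell}$, and the (anti)commutator, read off the $\l$-brackets, is a combination of currents of strictly positive mode index that also annihilates $v_{\nu,\ell}$.

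For \eqref{m4}, one expands $J^{\{a\}}_0 w=G^{\{v\}}_0 J^{\{a\}}_0 v_{\nu,\ell}+G^{\{[a,v]\}}_0 v_{\nu,\ell}$ for a positive root vector $a\in\mathfrak n_+^\natural$. The first summand vanishes by \eqref{m4} for $v_{\nu,\ell}$, and a case analysis on the $B_3$-positive roots $\e_i\pm\e_j$, $\e_i$ shows that whenever $[a,v]\ne 0$ its weight is one of $\half(\e_1+\e_2+\e_3)$, $\half(\e_1-\e_2+\e_3)$, $\half(\e_1+\e_2-\e_3)$, all lying in the original $\ov\D^+_{1/2}$; hence $G^{\{[a,v]\}}_0 v_{\nu,\ell}=0$ by the original \eqref{m5}.

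The subtle step is \eqref{m5} for the new $\ov\D^+_{1/2}$. For $v'$ a weight vector of one of the three weights in the intersection of the old and new sets, expand $G^{\{v'\}}_0 w=-G^{\{v\}}_0 G^{\{v'\}}_0 v_{\nu,\ell}+\{G^{\{v'\}}_0,G^{\{v\}}_0\} v_{\nu,\ell}$: the first term is $0$ by the old \eqref{m5}, while the anticommutator, extracted from $[G^{\{v'\}}_\l G^{\{v\}}]$ in \cite{KW1}, reduces on $v_{\nu,\ell}$ (after normal-ordering and use of the original highest weight relations) to a scalar combination of $\ell$, $\nu(h)$ for $h\in\h^\natural$, and central constants, together with $\g^\natural$-current actions of weight $v'+v\in\{\e_2+\e_3,\,\e_3,\,\e_2\}$; the latter are positive roots of $\g^\natural$, so those terms annihilate $v_{\nu,\ell}$ by \eqref{m4}. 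In the remaining case $v'=v$, one gets $\tfrac12\{G^{\{v\}}_0,G^{\{v\}}_0\}v_{\nu,\ell}$; since $2\psi=-\e_1+\e_2+\e_3$ is not a root of $\g^\natural$, the vector-valued current contributions to the anticommutator vanish for weight reasons, again leaving only a scalar to be checked. The main obstacle is to verify, in each of these four cases, that the residual scalar indeed equals zero---a finite but delicate calculation powered by the explicit form of $[G^{\{u\}}_\l G^{\{v\}}]$ in \cite[Theorem 5.1]{KW1}.
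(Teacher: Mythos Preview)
Your outline is broadly on track for the $J$-- and $L$--annihilation conditions, but the treatment of \eqref{m5} for the new $\ov\D^+_{1/2}$ contains a genuine gap. In the anticommutator $\{G^{\{v'\}}_0,G^{\{v\}}_0\}$ you account only for a ``scalar'' piece and for linear current terms $J^{\{\cdot\}}$ of total $\h^\natural$-weight $v'+v$. The $\l$-bracket $[G^{\{u\}}_\l G^{\{u'\}}]$ from \cite[Theorem 5.1]{KW1}, however, also contains quadratic normally ordered products $:J^{\{\cdot\}}J^{\{\cdot\}}:$; their zero-mode contribution to the anticommutator has total weight $v'+v$, but as a \emph{sum of products} of two root currents it does not vanish ``for weight reasons''. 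In particular, for $v'=v$ your claim that ``the vector-valued current contributions vanish since $2\psi$ is not a root'' is incorrect: what must be shown is that $\sum_{\a,\be}\langle[u_\a,v],[v,u^\be]\rangle\bigl(J^{\{u_\be\}}_0J^{\{u^\a\}}_0+J^{\{u^\a\}}_0J^{\{u_\be\}}_0\bigr)v_{\nu,\ell}=0$. The paper does this by listing all root pairs $(\a,\be)$ with $\a-\be=\e_1-\e_2-\e_3$ and observing that in each pair the root vectors commute and at least one is positive, so both summands kill $v_{\nu,\ell}$. This step is not recoverable from your weight argument.

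Conversely, the ``residual scalar'' you flag is a non-issue: in all four cases $\langle v',v\rangle=0$ because the $\h^\natural$-weights do not sum to zero, so the $L$-- and scalar pieces of the anticommutator vanish automatically. The paper also organizes the remaining three cases more efficiently than a direct computation: once $(G^{\{v\}}_0)^2v_{\nu,\ell}=0$ and the $J$-annihilation \eqref{m4} are established, one uses that every $w\in\mathfrak n_{-1/2}(\s_R)'_+$ (for the new $\ov\D^+_{1/2}$) is of the form $[a,v]$ with $a\in U(\mathfrak n_0(\s_R)_+)$, whence $G^{\{w\}}_0=[J^{\{a\}}_0,G^{\{v\}}_0]$ kills $G^{\{v\}}_0v_{\nu,\ell}$ by induction. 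This avoids repeating the quadratic-term analysis three more times.
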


\begin{proof}
We first check that 
\begin{align}
&J^{\{x_{-\theta}\}}_1G^{\{v\}}_0v_{\nu,\ell}=0,J^{\{x_{\e_1-\e_2}\}}_0G^{\{v\}}_0v_{\nu,\ell}=0,J^{\{x_{\e_2-\e_3}\}}_0G^{\{v\}}_0v_{\nu,\ell}=0, J^{\{x_{\e_3}\}}_0G^{\{v\}}_0v_{\nu,\ell}=0\label{J1},\\
&G^{\{v\}}_0G^{\{v\}}_0v_{\nu,\ell}=0\label{G2}.
\end{align}

It is clear that $[J^{\{x_{-\theta}\}}_1,G^{\{v\}}_0]=0$ hence $J^{\{x_{-\theta}\}}_1G^{\{v\}}_0v_{\nu,\ell}=J^{\{x_{-\theta}\}}_1G^{\{v\}}_0J^{\{x_{-\theta}\}}_1v_{\nu,\ell}=0$. Similarly for the third and fourth relation in \eqref{J1}. For the second relation in \eqref{J1}, observe that
$[J^{\{x_{\e_2-\e_3}\}}_0,G^{\{v\}}_0]=G^{\{[x_{\e_2-\e_3},v]\}}_0$ and $[x_{\e_2-\e_3},v]$ has weight $\half(\e_1-\e_2+\e_3)$ so
$$
J^{\{x_{\e_1-\e_2}\}}_0G^{\{v\}}_0v_{\nu,\ell}=G^{\{[x_{\e_2-\e_3},v]\}}_0v_{\nu,\ell}+G^{\{v\}}_0J^{\{x_{\e_1-\e_2}\}}_0v_{\nu,\ell}=0.
$$

It remains to check that $G^{\{v\}}_0G^{\{v\}}_0v_{\nu,\ell}=0$. We use the formula
\begin{align*}\notag &2G^{\{v\}}_{0}G^{\{v\}}_{0}v_{\nu, \ell}=[G^{\{v\}}_{0},G^{\{v\}}_{0}]v_{\nu, \ell}=\langle v,v\rangle(-2(k+h^\vee) \ell+ (\nu|\nu+2\rho^\natural)-\tfrac{1}{2} p(k))v_{\nu,\ell}\notag\\
        &+\sum_{\a,\be}\langle[u_\alpha,v],[v,u^\be]\rangle J^{\{u_\beta\}}_0J^{\{u^\a\}}_0v_{\nu,\ell}
+  \sum_{\a,\be}\langle[u_\alpha,v],[v,u^\be]\rangle
 J^{\{u^\a\}}_0J^{\{u_\beta\}}_0v_{\nu,\ell}\\
 &=\sum_{\a,\be}\langle[u_\alpha,v],[v,u^\be]\rangle \left(J^{\{u_\beta\}}_0J^{\{u^\a\}}_0
+ J^{\{u^\a\}}_0J^{\{u_\beta\}}_0\right)v_{\nu,\ell}.
\end{align*}
Observe that the possibly nonzero contributions to the above sum  come the pairs $(\a,\be)$ of roots such that $\a-\beta=\e_1-\e_2-\e_3$. One easily checks that these pairs are exactly
$$\{(-\e_2-\e_3,-\e_1),(-\e_2,-\e_1+\e_3),(-\e_3,-\e_1+\e_2),(\e_1-\e_2,\e_3),(\e_1-\e_3,\e_2),(\e_1,\e_2+\e_3)\}
.$$
Note that each pair corresponds to  commuting root vectors, so that, 
$ J^{\{u^\a\}}_0J^{\{u_\beta\}}_0v_{\nu,\ell}=J^{\{u_\beta\}}_0J^{\{u^\a\}}_0v_{\nu,\ell}=0$, since in all cases, either $u_\beta$ or $u^\a$ is a root vector corresponding to a positive root.

It it is well-known that \eqref{J1} implies that $J_n^{\{a\}}v_{\nu,\ell}=0$ for $n>0$ and  $a\in\g^\natural$ as well as for $n=0$ and $a\in \mathfrak n_0(\s_R)_+$. 

Note that, if $w\in \mathfrak n_{-1/2}(\s_R)'_+$, then $w=[a,v]$ with $a\in U(\mathfrak n_0(\s_R)_+)$. Using the relation $[J^{\{a\}}_n,G_0^{\{v\}}]=G^{\{[a,v]\}}_n$ one obtains  that $G^{\{w\}}_0v_{\nu,\ell}=0$ for all $w\in \mathfrak n_{-1/2}(\s_R)'_+$. We now check that $G^{\{w\}}_nG_0^{\{v\}}v_{\nu,\ell}=0$ for $n>0$ and $w\in\g_{-1/2}$. 
We note that  $[J^{\{x_{-\e_2-\e_3}\}}_n,G_0^{\{v\}}]=G^{\{v_{\frac{1}{2}(-\e_1-\e_2-\e_3)}\}}_n$ and using the fact that $\g_{-1/2}$ has $(\Dp)^\natural$-lowest weight equal to $\frac{1}{2}(-\e_1-\e_2-\e_3)$, then, since $\g_{-1/2}=U(\mathfrak n_0(\s_R)_+)v_{\frac{1}{2}(-\e_1-\e_2-\e_3)}$, we can use repeatedly  $J^{\{a\}}_0$ with $a\in \mathfrak n_0(\s_R)_+$ to obtain $G^{\{w\}}_n$ for all $w\in\g_{-1/2}$.\end{proof}

Let $L^R(\nu,\ell)$ be the irreducible $\s^R$-twisted highest weight module of highest weight $(\nu,\ell)$. Let $v_{\nu,\ell}\in L^R(\nu,\ell)$ be a highest weight vector. Recall that this means that $v_{\nu,\ell}$ is a cyclic vector in $L^R(\nu,\ell)$ that satisfies \eqref{m3}, \eqref{m4}, \eqref{m5}.
 In the following result we use spectral flow to provide a proof of  \cite[Theorem 9.17]{KMPR} that does not rely on Conjecture 9.11 of \cite{KMPR}.

Recall that a weight $\nu\in P^+_k$ is said to be {\it Ramond extremal} (w.r.t. $\rho_R$) if $\nu-\rho_R\notin  P^+_k$ or $\nu-\rho_R$ is extremal (see \cite[(9.3)]{KMPR} and \cite[Definition 8.7]{KMP1}). 
\begin{theorem}\label{sfnomextremal}  If $\ell\ge A^R(k,\mu)$, $k$ is in the unitary range, and $\mu\in P^+_k(R)$  is  not Ramond extremal, then $L^R(\mu,\ell)$  is a unitary $\si_R$-twisted $W^k_{\min}(\g)$--module.
\end{theorem}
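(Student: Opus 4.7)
The plan is to realize $L^R(\mu,\ell)$ as the spectral-flow image of a unitary ordinary highest-weight $\Wu$-module whose unitarity is proved unconditionally in \cite{KMP1}. Three ingredients from the preceding sections make this viable: Proposition~\ref{Hwumaphwu}(b), which identifies the spectral flow of an irreducible ordinary highest-weight module with a specific irreducible $\sigma_R$-twisted highest-weight module; Lemma~\ref{remarkable}, which sets up the bijection $\nu\mapsto\nu^R=\nu+M_i(k)\rho_R$ between $P^+_k(NS)$ and $P^+_k(R)$ and shows that $A^{NS}(k,\nu)$ and $A^R(k,\nu^R)$ differ by precisely the shift appearing in Proposition~\ref{Hwumaphwu}; and Lemma~\ref{sfunitary}, which transfers unitarity across the spectral flow in both directions.

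Concretely, given $\mu\in P^+_k(R)$ not Ramond extremal and $\ell\ge A^R(k,\mu)$, I would invert the bijection in Lemma~\ref{remarkable} to produce the unique $\nu\in P^+_k(NS)$ with $\nu^R=\mu$, and set
$$
\ell_0=\ell-\frac{2}{(\theta^\natural_i|\theta^\natural_i)}(\nu|\rho_R)-\frac{M_i(k)(\rho_R|\rho_R)}{(\theta^\natural_i|\theta^\natural_i)},
$$
so that $\ell_0^R=\ell$; by Lemma~\ref{remarkable} this is equivalent to $\ell_0\ge A^{NS}(k,\nu)$. The non-extremality of $\mu$ in the Ramond sense should translate, under the affine bijection $\mu\leftrightarrow\nu$ induced by $t_{\rho_R}w^j_0w_0$ (which carries walls of $P^+_k(R)$ to walls of $P^+_k(NS)$), into the hypotheses under which the unconditional non-extremal NS-unitarity theorem of \cite{KMP1} guarantees that $L^{NS}(\nu,\ell_0)$ is unitary. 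Applying Proposition~\ref{Hwumaphwu}(b) then identifies the spectral flow of $L^{NS}(\nu,\ell_0)$ as the irreducible $\sigma_R$-twisted highest-weight module with highest weight $(\nu^R,\ell_0^R)=(\mu,\ell)$, namely $L^R(\mu,\ell)$, and Lemma~\ref{sfunitary} lifts ordinary unitarity to $\sigma_R$-twisted unitarity. This settles all $\rho_R$ in Table~\ref{thetahalfisnotroot}.

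The remaining case $\g=F(4)$ with $\rho_R=\omega^1_3$, absent from Table~\ref{thetahalfisnotroot}, is handled separately using Lemma~\ref{42}: starting from the unitary $L^R(\nu,\ell)$ for $\rho_R=\omega^1_1$ already constructed, the vector $G^{\{v\}}_0v_{\nu,\ell}$, when nonzero, is a highest-weight vector adapted to the alternative choice of $\overline{\Delta}^+_{1/2}$ with shifted highest weight $(\nu-\omega^1_1+\omega^1_3,\ell)$. The submodule it generates inherits unitarity from the ambient unitary $W$-algebra, and a short case analysis on $\nu$ exhausts all highest weights in $P^+_k(R)$ arising for the $\omega^1_3$ choice, after which the previous paragraph's spectral-flow argument applies.

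The main obstacle is verifying that the hypothesis "$\mu$ is not Ramond extremal" restricts $(\nu,\ell_0)$ to land squarely inside the non-extremal NS-unitarity range of \cite{KMP1}, without any appeal to the conjectural exactness of the twisted reduction functor. Because the bijection $\nu\leftrightarrow\nu^R$ is affine and induced by a Weyl-group-like element permuting the walls of the alcove, matching the two notions of extremality should be straightforward, but a case-by-case inspection parallel to the proof of Lemma~\ref{remarkable} is likely needed to make this rigorous for each of $\g=psl(2|2)$, $spo(2|2m)$, $D(2,1;a)$, and $F(4)$.
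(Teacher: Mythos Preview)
Your overall strategy matches the paper's exactly: invert the spectral-flow bijection, invoke the NS non-extremal unitarity theorem from \cite{KMP1}, and push unitarity forward via Proposition~\ref{Hwumaphwu}(b) and Lemma~\ref{sfunitary}. The one genuine divergence is in how you propose to show that ``$\mu$ not Ramond extremal'' implies ``$\nu$ not NS-extremal.'' You plan a direct combinatorial/affine-Weyl argument, with a case-by-case check in reserve. The paper instead bootstraps: it picks $\hat\ell\gg A^R(k,\mu)$, cites \cite[Theorem~7.5]{KMPR} (which already gives unitarity of $L^R(\mu,\hat\ell)$ unconditionally for $\hat\ell$ large), runs the spectral flow \emph{backwards} via Lemma~\ref{sfunitary} to get $L(\nu,\hat\ell_0)$ unitary with $\hat\ell_0\gg A^{NS}(k,\nu)$, and then appeals to \cite[Propositions~8.5 and~8.8]{KMP1}, which say that an NS-extremal $\nu$ admits no unitary module with $\ell_0>A^{NS}(k,\nu)$. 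This circumvents any comparison of the two extremality notions and is uniform across all cases in Table~\ref{thetahalfisnotroot}. Your route may well succeed, but note that ``Ramond extremal'' is defined via $\mu-\rho_R$, whereas the NS weight is $\nu=\mu-M_i(k)\rho_R$, so the translation is not a simple wall-to-wall correspondence; the paper's detour through unitarity at large $\ell$ is what makes this step effortless.

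For $\g=F(4)$, $\rho_R=\omega^1_3$, your sketch is correct in spirit but omits three points the paper makes explicit: (i) one must check that $\mu'=\mu+\omega_1^1-\omega_3^1$ lies in $P^+_k$ and is not Ramond extremal with respect to $\omega_1^1$; (ii) the equality $A^R(k,\mu)=A^R(k,\mu')$ (verified directly) is needed so that the norm formula \eqref{norma} forces $G^{\{v\}}_0v_{\mu',\ell}\ne0$ precisely when $\ell>A^R(k,\mu)$, whence irreducibility of $L^R(\mu',\ell)$ identifies it with the desired $\omega_3^1$-module; (iii) the boundary case $\ell=A^R(k,\mu)$ is handled by a limiting argument as in \cite[Theorem~11.1]{KMP1}, not by Lemma~\ref{42}.
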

\begin{proof}
If $\rho_R=\omega^i_j$ as in Table \ref{thetahalfisnotroot}, set
$$
 \nu=\mu-M_i(k)\rho_R,\ \ell_0=\ell -\frac{2}{(\theta^\natural_i|\theta^\natural_i) }(\mu|\rho_R)+\frac{1}{(\theta^\natural_i|\theta^\natural_i) }M_i(k)(\rho_R|\rho_R),
 $$
 so that
 $\mu=\nu^R$ and $\ell=\ell_0^R$ (cf. \eqref{fr}). 
 We claim that $\nu$ is not an extremal weight in the Neveau-Schwarz sector. Indeed choose $\hat \ell\gg A^R(k,\mu)$. By \cite[Theorem 7.5]{KMPR}, $L^R(\mu,\hat \ell)$  is a unitary $\si_R$-twisted $W^k_{\min}(\g)$--module. By Lemma \ref{sfunitary}, since,  by Proposition \ref{Hwumaphwu},  $(L(\nu,\hat\ell_0)^\dagger)^\dagger=L^R(\mu,\hat \ell)$, we see that $L(\nu,\hat\ell_0)$ is unitary. 
 Since, by Lemma \ref{remarkable}, $\hat\ell_0\gg A^{NS}(k,\nu)$, combining  Proposition 8.5 and Proposition 8.8 of \cite{KMP1}, we deduce that $\nu$ is not extremal, as claimed.
 
Since $\nu$ is not extremal, Theorem   11.1 of \cite{KMP1} now implies that $L(\nu,\ell_0)$ is unitary for all $\ell_0\ge A^{NS}(k,\nu)$,  thus,  by Lemma \ref{remarkable} and Lemma \ref{sfunitary}, $(L(\nu,\ell_0)^\dagger)^\dagger=L^R(\mu,\ell)$ is unitary for all $\ell\ge A^R(k,\mu)$.

Assume now $\g=F(4)$ and $\rho_R=\omega_3^1$. Set $\mu'=\mu+\omega_1^1-\omega_3^1$. We first prove that  $\mu'$ is in $P^+_k$ and not Ramond extremal w.r.t. 
$\rho'_R=\omega_1^1$. Since  $\mu\in P^+_k$ and $\omega_1^1-\omega_3^1=\half(\e_1-\e_2-\e_3)$, we have $(\mu',\theta^\vee)\leq k$. 
Moreover, by assumption, $\mu-\rho_R\in  P^+_k$, hence $\mu-\rho_R+\rho'_R$ is a dominant integral weight.

We prove that if $\ell>A^R(k,\mu)$, then $G^{\{v\}}_0v_{\mu',\ell}\ne0$.
A direct computation using the explicit expression given in \S \ref{F4} shows that $A^R(k,\mu)=A^R(k,\mu')$. 
From   Proposition 6.6, (1) in \cite{KMPR} it follows that 
\begin{equation}\label{norma} ||G^{\{v\}}_0v_{\mu',\ell}||^2=-2(k+h^\vee)\langle \phi(v),v\rangle \left( \ell- A(k,\mu')\right).\end{equation}
Here $\langle a , b\rangle=(e|[a,b])$ is the  bilinear form on $\g_{-\frac{1}{2}}$ defined e.g. in \cite[(3.4)]{KMPR}). Since $\ell>A^R(k,\mu')$,  we have  $||G^{\{v\}}_0v_{\mu',\ell}||^2\ne 0$. By Lemma \ref{42}, $G^{\{v\}}_0v_{\mu',\ell}$ is a highest weight vector, hence the highest weight module $L^R(\mu',\ell)$ (w.r.t. $\rho_R=\omega_1^1$) is the irreducible highest weight module w.r.t. $\rho_R=\omega_3^1$ of highest weight $(\mu,\ell)$.   By the first part of the proof, this module is unitary. 

Finally, if $\ell=A^R(k,\mu)$ we use the limiting argument given e.g. in \cite[Theorem 11.1]{KMP1}, \cite[Theorem  9.17]{KMPR} to conclude that $L^R(\mu,A(k,\mu))$ is unitary as well.
\end{proof}

The next result discusses the extremal representations.

\begin{theorem}\label{fromNStoR}
 The extremal representations $L(\nu,A^{NS}(k,\nu))$ are all unitary if and only if the Ramond extremal representations  $L^R(\mu,A^R(k,\mu))$ are all unitary.
\end{theorem}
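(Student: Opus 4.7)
The proof combines the spectral flow correspondence with the unconditional unitarity results for non-extremal modules. For any $\rho_R$ chosen from Table \ref{thetahalfisnotroot}, Proposition \ref{Hwumaphwu}, Lemma \ref{remarkable} and Lemma \ref{sfunitary} together imply that spectral flow identifies $L(\nu, A^{NS}(k,\nu))$ with $L^R(\nu^R, A^R(k, \nu^R))$, where $\nu^R = \nu + M_i(k)\rho_R$, in a unitarity-preserving way. Moreover the assignment $\nu \mapsto \nu^R$ is a bijection $P^+_k(NS) \to P^+_k(R)$, by the affine-translation argument at the start of the proof of Lemma \ref{remarkable}.

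For the forward direction, suppose every Ramond-extremal $L^R(\mu, A^R(k,\mu))$ is unitary and let $\nu$ be NS-extremal. The spectral flow image $L^R(\nu^R, A^R(k, \nu^R))$ is unitary, either by the hypothesis (if $\nu^R$ is Ramond-extremal) or by Theorem \ref{sfnomextremal} (if $\nu^R$ is not Ramond-extremal); in either case Lemma \ref{sfunitary} gives unitarity of $L(\nu, A^{NS}(k, \nu))$. The reverse direction is entirely symmetric: given a Ramond-extremal $\mu$, set $\nu = \mu - M_i(k)\rho_R \in P^+_k(NS)$; then $L(\nu, A^{NS}(k, \nu))$ is unitary, either by the hypothesis (if $\nu$ is NS-extremal) or by \cite[Theorem 11.1]{KMP1} (otherwise), and spectral flow transfers unitarity back to $L^R(\mu, A^R(k,\mu))$.

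The main obstacle is the case $\g = F(4)$ with $\rho_R = \omega_3^1$, which lies outside Table \ref{thetahalfisnotroot} and so outside the direct scope of spectral flow. Here I would invoke Lemma \ref{42} as a bridge: given a Ramond-extremal $\mu$ with respect to $\omega_3^1$, set $\mu' = \mu + \omega_1^1 - \omega_3^1$. A short check confirms $\mu' \in P^+_k$, and subsection \ref{F4} shows $A^R(k, \mu) = A^R(k, \mu')$. For $\ell > A^R(k, \mu')$, formula \eqref{norma} yields $G^{\{v\}}_0 v_{\mu', \ell} \ne 0$, and Lemma \ref{42} then says that this vector generates a Ramond-twisted highest weight module of weight $(\mu, \ell)$ with respect to $\rho_R = \omega_3^1$, whose unitarity is inherited from that of $L^R(\mu', \ell)$ (covered by the first two paragraphs). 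A limiting argument, as in the proofs of \cite[Theorem 11.1]{KMP1} and \cite[Theorem 9.17]{KMPR}, handles the boundary value $\ell = A^R(k, \mu)$. The delicate point is verifying that the intertwining vector $G^{\{v\}}_0 v_{\mu', \ell}$ remains well-behaved as $\ell$ approaches the minimal energy, and that the two notions of Ramond-extremality (with respect to $\omega_1^1$ and $\omega_3^1$) are compatible with this passage.
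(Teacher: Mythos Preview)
Your overall strategy --- spectral flow plus a case split on extremal versus non-extremal, invoking Theorem~\ref{sfnomextremal} and \cite[Theorem~11.1]{KMP1} for the non-extremal cases --- is correct and is exactly what the paper does for $\rho_R$ in Table~\ref{thetahalfisnotroot}.

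The gap is in your treatment of $\g=F(4)$, $\rho_R=\omega_3^1$. You set $\mu'=\mu+\omega_1^1-\omega_3^1$ and assert that ``a short check confirms $\mu'\in P^+_k$''. This is precisely where the non-extremality hypothesis was used in the proof of Theorem~\ref{sfnomextremal}: there one knows $\mu-\omega_3^1\in P^+_k$, whence $\mu'=(\mu-\omega_3^1)+\omega_1^1$ is dominant. But here $\mu$ is Ramond-extremal with respect to $\omega_3^1$, so $\mu-\omega_3^1$ may fail to lie in $P^+_k$, and then $\mu'$ need not be dominant. Concretely, $\mu=\e_1\in P^+_k$ gives $\mu'=\tfrac{3}{2}\e_1-\tfrac{1}{2}\e_2-\tfrac{1}{2}\e_3$, which is not dominant for $so(7)$. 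So the bridge via Lemma~\ref{42} and a limiting argument, as in Theorem~\ref{sfnomextremal}, does not go through in the extremal case.

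The paper avoids this by a more direct observation: stay with $\mu$ itself. By the first part of the proof, $L^R(\mu,A^R(k,\mu))$ built with respect to $\omega_1^1$ is unitary. At the boundary $\ell=A^R(k,\mu)$, formula~\eqref{norma} gives $\|G^{\{v_{\frac{1}{2}(-\e_1+\e_2+\e_3)}\}}_0 v_{\mu,A^R(k,\mu)}\|=0$, hence this vector vanishes by unitarity. But the two sets $\overline\Delta_{1/2}^+$ corresponding to $\omega_1^1$ and $\omega_3^1$ differ only in the single weight $\tfrac{1}{2}(\pm\e_1\mp\e_2\mp\e_3)$, so this vanishing is exactly the one additional condition needed for $v_{\mu,A^R(k,\mu)}$ to be a highest weight vector with respect to $\omega_3^1$ as well, with the \emph{same} highest weight $(\mu,A^R(k,\mu))$. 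Thus the same unitary irreducible module serves for both choices of $\rho_R$, and no shift to $\mu'$ or limiting argument is required.
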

\begin{proof} Assume that $\rho^R$ is as given in Table \ref{thetahalfisnotroot}.
Suppose that $L(\nu,A^{NS}(k,\nu))$ is unitary for all extremal $\nu$. Then, by Theorem   11.1 of \cite{KMP1},   $L(\nu,A^{NS}(k,\nu))$ is unitary for all $\nu\in P^+_k(NS)$. Thus, Proposition \ref{Hwumaphwu} and Lemma \ref{sfunitary} imply that $L^R(\nu^R,A^{R}(k,\nu^R))$ are all unitary. Since the map $\nu\mapsto \nu^R$ is a bijection between $P^+_k(NS)$ and $P^+_k(R)$, we deduce that $L^R(\mu,A^{R}(k,\mu)$ is unitary for all $\mu\in P^+_k(R)$, hence, in particular,  $L^R(\mu,A^{R}(k,\mu)$ is unitary  for all Ramond extremal weights.

We have proven that, if the extremal representations $L(\nu,A^{NS}(k,\nu))$ are all unitary then the Ramond extremal representations $L^R(\mu,A^R(k,\mu))$ are all unitary. 
Now we discuss the missing case, when $\g=F(4)$ and $\rho_R=\omega_3^1$. By the first part of the proof if $\mu\in P^+_k$, then the highest weight 
(w.r.t. $\rho'_R:=\omega_1^1$) module $L^R(\mu, A^R(k,\mu))$  is unitary. By \eqref{norma}, $||G^{\{v_{\frac{1}{2}(-\e_1+\e_2+\e_3)}\}}_0v_{\mu,A^R(k,\mu)}||=0$, hence
$G^{\{v_{\frac{1}{2}(-\e_1+\e_2+\e_3)}\}}_0v_{\mu,A^R(k,\mu)}=0$, so that $L^R(\mu, A^R(k,\mu))$ is a (unitary) highest weight module w.r.t. $\rho_R=\omega_3^1$ too. 

The converse statement is  deduced by reversing the argument and using Theorem \ref{sfnomextremal} above instead of Theorem 11.1 of \cite{KMP1}.
\end{proof}

\begin{remark}
It is proven in \cite{Gunaydin} that, if $\g$ is $D(2,1;a)$, then all the extremal representations (called massless there) in Neveu-Schwarz sector are unitary. A detailed proof of this fact can be found in \cite{KMPN4}. Theorem \ref{fromNStoR} now implies that all Ramond extremal representations are unitary, a fact already observed in \cite{Gunaydin}.
\end{remark}

\begin{remark} As a result of the present paper, in order to complete
the classification of unitary irreducible highest weight modules
over unitary vertex algebras $\Ws$, it remains to prove our unitarity conjectures in \cite{KMP1} and \cite{KMPR}
for extremal modules in cases $\g=spo(2|m)$ for $m>4$, $F(4)$, and $G(3)$ in the Neveu-Schwarz sector, and in cases $\g=spo(2|2m+1)$ for $m>1$, and $G(3)$ in the Ramond sector.
\end{remark}
\section*{Acknowledgments}
The authors would like to thank Dra\v{z}en Adamovi\'c  and Haisheng Li for important correspondence. The authors would like to thank the referee for his valuable suggestions on the exposition. 
Victor Kac is partially supported by Simons Travel Grant. Pierluigi M\"oseneder Frajria and Paolo Papi  are partially supported by the PRIN project 2022S8SSW2 - Algebraic and geometric aspects of Lie theory - CUP B53D2300942 0006, a project cofinanced
by European Union - Next Generation EU fund.


\begin{thebibliography}{10}

\bibitem{AKMPP}
{\sc D.~Adamovi\'{c}, V.~G. Kac, P.~M\"{o}seneder~Frajria, P.~Papi, and
  O.~Per\v{s}e}, {\em Conformal embeddings of affine vertex algebras in minimal
  {$W$}-algebras {I}: structural results}, J. Algebra, {\bf 500} (2018),
  pp.~117--152.

\bibitem{Compatible}
{\sc P.~Cellini, P.~M\"{o}seneder~Frajria, and P.~Papi}, {\em Compatible
  discrete series}, Pacific J. Math., {\bf 212}, No.~2 (2003), pp.~201--230.

\bibitem{DL}
{\sc C.~Dong and X.~Lin}, {\em Unitary vertex operator superalgebras}, J.
  Algebra, {\bf 397} (2014), pp.~252--277.

\bibitem{ET1}
{\sc T.~Eguchi and A.~Taormina}, {\em Unitary representations of the {$N=4$}
  superconformal algebra}, Phys. Lett. B, {\bf 196}, No.~1 (1987), pp.~75--81.

\bibitem{FHL}
{\sc I.~B. Frenkel, Y.-Z. Huang, and J.~Lepowsky}, {\em On axiomatic approaches
  to vertex operator algebras and modules}, Mem. Amer. Math. Soc., {\bf 104},
  No.~494 (1993), pp.~viii+64.

\bibitem{Gunaydin}
{\sc M.~G\"unaydin, J.~L. Petersen, A.~Taormina, and A.~Van~Proeyen}, {\em On
  the unitary representations of a class of {$N=4$} superconformal algebras},
  Nuclear Phys. B, {\bf 322}, No.~2 (1989), pp.~402--430.

\bibitem{IwMa}
{\sc N.~Iwahori and H.~Matsumoto}, {\em On some {B}ruhat decomposition and the
  structure of the {H}ecke rings of {$p$}-adic {C}hevalley groups}, Inst.
  Hautes \'Etudes Sci. Publ. Math.,  (1965), pp.~5--48.

\bibitem{KB}
{\sc V.~G. Kac}, {\em Vertex algebras for beginners}, vol.~10 of University
  Lecture Series, American Mathematical Society, Providence, RI, second~ed.,
  1998.

\bibitem{KMP}
{\sc V.~G. Kac, P.~M\"{o}seneder~Frajria, and P.~Papi}, {\em Invariant
  {H}ermitian forms on vertex algebras}, Commun. Contemp. Math., {\bf 24},
  No.~5 (2022), 41~pp.

\bibitem{KMP1}
\leavevmode\vrule height 2pt depth -1.6pt width 23pt, {\em Unitarity of minimal
  {$W$}--algebras and their representations {I}}, Commun. Math. Phys., {\bf
  401}, No.~1 (2023), pp.~79--145.



\bibitem{KMPR}
\leavevmode\vrule height 2pt depth -1.6pt width 23pt, {\em Unitarity of minimal
  {$W$}-algebras and their representations {II}: Ramond sector}, Jpn. J. Math.,
  {\bf 20} no.2,  (2025), pp.~169--281.
  
  \bibitem{KMPN4}
\leavevmode\vrule height 2pt depth -1.6pt width 23pt, {\em Extremal unitary
  representations of big {$N=4$} superconformal algebra}. Jpn J. Math. (to appear)
\newblock arXiv:2507.10130, 2025.

\bibitem{KRW}
{\sc V.~G. Kac, S.-S. Roan, and M.~Wakimoto}, {\em Quantum reduction for affine
  superalgebras}, Commun. Math. Phys., {\bf 241}, No.~2-3 (2003), pp.~307--342.

\bibitem{KW1}
{\sc V.~G. Kac and M.~Wakimoto}, {\em Quantum reduction and representation
  theory of superconformal algebras}, Adv. Math., {\bf 185}, No.~2 (2004),
  pp.~400--458.

\bibitem{Lii}
{\sc H.~Li}, {\em The physics superselection principle in vertex operator
  algebra theory}, J. Algebra, {\bf 196}, No.~2 (1997), pp.~436--457.

\bibitem{Lii2}
\leavevmode\vrule height 2pt depth -1.6pt width 23pt, {\em Twisted modules and
  pseudo-endomorphisms}, Algebra Colloq., {\bf 19}, No.~2 (2012), pp.~219--236.

\bibitem{M}
{\sc K.~Miki}, {\em The representation theory of the {${\rm SO}(3)$} invariant
  superconformal algebra}, Internat. J. Modern Phys. A, {\bf 5}, No.~7 (1990),
  pp.~1293--1318.

\end{thebibliography}

\vskip5pt
    \footnotesize{
\noindent{\bf V.K.}: Department of Mathematics, MIT, 77
Mass. Ave, Cambridge, MA 02139;\newline
{\tt kac@math.mit.edu}
\vskip5pt
\noindent{\bf P.M-F.}: Politecnico di Milano, Polo regionale di Como,
Via Anzani 42, 22100, Como, Italy;\newline {\tt pierluigi.moseneder@polimi.it}
\vskip5pt
\noindent{\bf P.P.}: Dipartimento di Matematica, Sapienza Universit\`a di Roma, P.le A. Moro 2,
00185, Roma, Italy;\newline {\tt papi@mat.uniroma1.it}, Corresponding author
}

\end{document}